\newcommand{\lb}{\linebreak}
\newcommand{\sysn}{\left\{\begin{array}{rcl}}
\newcommand{\sysk}{\end{array}\right.}
\newcommand{\ingrw}[2]{\includegraphics[width=#1mm]{#2}}
\newcommand{\thetacl}[1]{\overline{#1}^\theta}
\newcommand{\deltacl}[1]{\overline{#1}^\delta}
\newcommand{\sthetacl}[1]{\overline{#1}^\Theta}
\newtheorem{theorem}{Theorem}[section]
\newtheorem{lemma}[theorem]{Lemma}
\theoremstyle{example}
\newtheorem{example}[theorem]{Example}
\newtheorem{proposition}[theorem]{Proposition}
\theoremstyle{definition}
\newtheorem{definition}[theorem]{Definition}
\newtheorem{corollary}[theorem]{Corollary}
\journal{...}
\begin{document}

\begin{frontmatter}


\title{Various types of completeness in topologized semilattices}

\author[affil1]{Konstantin Kazachenko}

\address[affil1]{Krasovskii Institute of Mathematics and Mechanics, Yekaterinburg, Russia}

\ead[affil1]{voice1081@gmail.com}

\author[affil2]{Alexander V. Osipov}

\address[affil2]{Krasovskii Institute of Mathematics and Mechanics, \\ Ural Federal
 University, Ural State University of Economics, Yekaterinburg, Russia}

\ead[affil2]{OAB@list.ru}

\begin{abstract}
A topologized semilattice $X$ is called {\it complete} if each
non-empty chain $C\subset X$ has $\inf{C}$ and $\sup{C}$ that
belong to the closure $\overline{C}$ of the chain $C$ in $X$.
 In this paper, we introduce various
concepts of completeness of topologized semilattices in the
context of operators that generalize the closure operator, and
study their basic properties. In addition, examples of specific
topologized semilattices are given, showing that these classes do
not coincide with each other. Also in this paper, we prove
theorems that allow us to generalize the available results on
complete semilattices endowed with topology.


\end{abstract}

\begin{keyword} topologized semilattice \sep complete semilattice \sep $\theta$-closed set \sep $\delta$-closed set \sep $H$-set


\MSC[2010] 06B30 \sep 06B35 \sep 54D55

\end{keyword}

\end{frontmatter}



\section{Introduction}
The use of various algebraic structures, additionally endowed with
the structure of a topological space, has long established itself
as a convenient and powerful tool in various fields of modern
mathematics. This phenomenon motivates the fundamental study of
the properties of these objects.

When studying spaces that have some additional structure
consistent with the topology, quite often the concept of
completeness naturally arises, as some internal property of these
objects. In most cases, completeness can also be described as an
external property, and often it is associated with the concept of
absolute closedness, understood in a suitable sense. For example, a
metric space $X$ is complete if and only if it is closed in every
metric space $Y$ containing $X$ as a metric subspace. A uniform
space $X$ is complete if and only if it is closed in every uniform
space $Y$ containing X as a uniform subspace. A topological group
$X$ is complete if and only if $X$, together with its two-sided
uniform structure, is a complete uniform space, and so on. The
completeness of semilattices is a well-studied algebraic property,
which generalizes quite naturally (using the closure operator in a
topological space) to semilattices endowed with a
topological structure.

It should be noted that one of the first mathematicians who studied the absolute closedness of various topologized algebraic objects (including semilattices) was O. V. Gutik (see for example \cite{gutik1, gutik2, gutik3})

The question of the closedness of the images of complete
topologized semilattices under continuous homomorphisms in
Hausdorff semitopological semilattices was first raised by T.
Banakh and S. Bardyla in \cite{BaBa1} and is currently solved
positively for some special cases. Historically, the first results
in this direction belong to J. W. Stepp; in particular, he proved
that semilattices having only finite chains are always closed in
every  topological semilattice containing it as a subsemilattice
\cite{stepp}. The above circumstances motivate the study of the
notion of completeness of topologized semilattices in the context
of operators that generalize the closure operator in a topological
space.

The objectives of this paper are to determine the corresponding
classes of semilattices endowed with topology, to study their
basic properties, to construct examples showing that these classes
do not coincide, and to generalize the well-known theorems on the
closure of images of complete semilattices under continuous
homomorphisms.

\section{The completeness of topologized semilattices}

A {\it semilattice} is any commutative semigroup of idempotents
(an element $x$ of a semigroup is called an {\it idempotent} if
$xx = x$).

A semilattice endowed with a topology is called a {\it topologized
semilattice}. A topologized semilattice $X$ is called a {\it
(semi)topological semilattice} if the semigroup operation $X\times
X \rightarrow X$, $(x,y)\mapsto xy$, is (separately) continuous.

It is well known that semilattices can be viewed as partially ordered
sets, namely: in every semilattice $X$ we can consider the
following order relation $\leq$: $x \leq y \leftrightarrow
xy=x=yx$. Endowed with this partial order, the semilattice is a
{\it poset}, i.e., partially ordered set. It is easy to see that
the element $xy$ is, in the sense of a given order, an infimum (a
greatest lower bound) of the elements $x$ and $y$. Conversely, if in a partially
ordered set $(X, \leq)$ each pair of elements has a greatest lower bound, then $X$ together with the operation of
taking the infimum is a semilattice.

A subset $D$ of a poset $(X,\leq)$ is called

$\bullet$  a {\it chain} if any elements $x,y\in D$ are comparable
in the sense that $x\leq y$ or $y\leq x$. This can be written as
$y\in {\updownarrow}x$ where ${\uparrow}x:=\{y\in D: x\leq y\}$,
${\downarrow}x:=\{y\in D: y\leq x\}$, and $\updownarrow
x:=({\uparrow}x)\cup ({\downarrow}x)$;

$\bullet$ {\it up-directed} if for any $x,y\in D$ there exists
$z\in D$ such that $x\leq z$ and $y\leq z$;

$\bullet$ {\it down-directed} if for any $x,y\in D$ there exists
$z\in D$ such that $z\leq x$ and $z\leq y$.

It is clear that each chain in a poset is both up-directed and
down-directed.

 A semilattice $X$ is called {\it chain-finite} if each chain in
$X$ is finite. A semilattice is called {\it linear} if it is a
chain in itself.

In \cite{stepp} Stepp proved that for any homomorphism
$h:X\rightarrow Y$ from a chain-finite semilattice to a Hausdorff
semitopological semilattice $Y$, the image $h(X)$ is closed in
$Y$.

In \cite{BaBa1} Banakh and Bardyla improved result of Stepp by
proving that for any homomorphism $h:X\rightarrow Y$ from a
chain-finite semilattice to a Hausdorff semitopological
semilattice $Y$, the image $h(X)$ is closed in $Y$.

The notion of completeness of semilattices is a well-known
algebraic property and is naturally transferred to topologized
semilattices: a topologized semilattice $X$ is called {\it
complete} if each non-empty chain $C\subset X$ has $\inf{C}$ and
$\sup{C}$ that belong to the closure $\overline{C}$ of the chain
$C$ in $X$.

A Hausdorff space $X$ is said to be {\it $H$-closed} if it is
closed in every Hausdorff space in which it can be embedded.

 Complete topologized semilattices play an important role in the
theory of (absolutely) $H$-closed semilattices, see [1-8]. By
\cite{BB1}, a {\it Hausdorff semitopological semilattice $X$ is
complete if and only if each closed chain in $X$ is compact if and
only if for any continuous homomorphism $h: S\rightarrow Y$ from a
closed subsemilattice $S\subset X$ to a Hausdorff topological
semigroup $Y$ the image $h(S)$ is closed in $Y$}.

A topologized semilattice $X$ is called

$\bullet$  {\it ${\uparrow}{\downarrow}$-closed} if for each $x
\in X$ the sets ${\uparrow}x$ and ${\downarrow}x$ are closed;

$\bullet$ {\it chain-compact} if each closed chain in $X$ is
compact.

\medskip

On each topologized semilattice we shall consider weaker
topologies:

- the {\it weak $chain^\bullet$-topology}  generated by the
subbase consisting of complements to closed chains in $X$,

- the {\it weak$^\bullet$ topology $\mathcal{W}^\bullet_X$},
generated by the subbase consisting of complements to closed
subsemilattices in $X$.

\medskip

 A topologized semilattice $X$ is called

$\bullet$ {\it $chain$-compact} if each closed chain in $X$ is
compact,

 $\bullet$ {\it weak $chain^\bullet$ compact} if its weak chain$^\bullet$ topology is compact,

$\bullet$ {\it $\mathcal{W}^\bullet_X$-compact} if its
weak$^\bullet$ topology $\mathcal{W}^\bullet_X$ is compact.

\medskip

The weak$^\bullet$-topology $\mathcal{W}^\bullet_X$ was introduced
and studied in \cite{BaBa4}. According to Lemmas 5.4, 5.5 of
\cite{BaBa4}, for any topologized semilattice we have the
implications:

complete $\Rightarrow$  $\mathcal{W}^\bullet_X$-compact
$\Rightarrow$ chain-compact.

\medskip
By Theorem 5.4 in \cite{BaBa5}, a chain-compact
${\uparrow}{\downarrow}$-closed topologized semilattice is
complete.

\medskip

Note that the weak chain$^\bullet$ topology of a topologized
semilattice $X$ is obviously contained in the weak$^\bullet$-
topology, which immediately implies:

\begin{lemma} A complete topologized semilattice $X$ is weak
$chain^\bullet$ compact.
\end{lemma}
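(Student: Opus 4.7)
The plan is to exploit the chain of implications already assembled in the paragraphs immediately before the lemma, and to reduce the statement to a soft topological fact: compactness is inherited by any coarser topology. Concretely, since $X$ is assumed complete, the paragraph citing Lemmas~5.4 and 5.5 of \cite{BaBa4} gives that $X$ is $\mathcal{W}^\bullet_X$-compact, i.e.\ compact with respect to the weak$^\bullet$ topology generated by complements of closed subsemilattices. It therefore suffices to show that the weak $chain^\bullet$ topology is coarser than $\mathcal{W}^\bullet_X$, because then the identity map from $(X,\mathcal{W}^\bullet_X)$ to $X$ equipped with the weak $chain^\bullet$ topology is continuous, hence the latter is compact as a continuous image of a compact space.

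For the comparison of topologies, I would argue as follows. Every chain $C\subset X$ is automatically a subsemilattice: for any $x,y\in C$ we have either $x\le y$ or $y\le x$, and hence $xy\in\{x,y\}\subset C$. Consequently, each closed chain in $X$ is a closed subsemilattice, so the subbase defining the weak $chain^\bullet$ topology (complements of closed chains) is contained in the subbase defining $\mathcal{W}^\bullet_X$ (complements of closed subsemilattices). This inclusion of subbases yields the desired inclusion of topologies, which is the content of the remark preceding the lemma.

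Combining the two steps, compactness of $X$ in the finer topology $\mathcal{W}^\bullet_X$ passes to compactness in the coarser weak $chain^\bullet$ topology, which is precisely the assertion that $X$ is weak $chain^\bullet$ compact. I do not foresee any genuine obstacle here; the only thing that needs a brief justification is the trivial observation that chains are subsemilattices, which makes the subbase inclusion (and hence the compactness transfer) immediate.
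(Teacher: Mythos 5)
Your proposal is correct and follows essentially the same route as the paper: the paper's own justification is exactly the remark that the weak $chain^\bullet$ topology is contained in the weak$^\bullet$ topology $\mathcal{W}^\bullet_X$ (since closed chains are closed subsemilattices), combined with the previously cited implication complete $\Rightarrow$ $\mathcal{W}^\bullet_X$-compact from Lemmas 5.4--5.5 of \cite{BaBa4}. You merely spell out the details (chains are subsemilattices, compactness descends to coarser topologies) that the paper leaves implicit.
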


It is also easy to prove the following statement.

\begin{lemma} A weak
$chain^\bullet$-compact topologized semilattice $X$ is
chain-compact.
\end{lemma}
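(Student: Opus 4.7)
The plan is to show that any closed chain $C \subset X$ is compact in the original topology by passing through the weak $chain^\bullet$-topology on $X$, which I will denote by $\mathcal{W}$.

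First I would observe that $C$ itself is $\mathcal{W}$-closed, because $X\setminus C$ is a subbasic open set of $\mathcal{W}$ by definition. Since $X$ is $\mathcal{W}$-compact by hypothesis, the closed subspace $C$ is $\mathcal{W}$-compact as well.

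The heart of the argument is then to identify the subspace topology $\mathcal{W}|_C$ with the subspace topology on $C$ inherited from the original topology $\tau$ of $X$. One inclusion $\mathcal{W}|_C \subseteq \tau|_C$ is immediate because $\mathcal{W}\subseteq \tau$ (each subbasic set $X\setminus D$ for a $\tau$-closed chain $D$ is $\tau$-open). For the reverse inclusion, the key observation is that since $C$ is a chain, every subset of $C$ is a chain; in particular every $\tau$-closed subset $F$ of $C$ (which, being closed in the $\tau$-closed set $C$, is $\tau$-closed in $X$) is a closed chain in $X$. Hence $X\setminus F$ is a subbasic $\mathcal{W}$-open set, and intersecting with $C$ gives $C\setminus F$ as a $\mathcal{W}|_C$-open set. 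This shows $\tau|_C \subseteq \mathcal{W}|_C$, and the two subspace topologies coincide.

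Combining these facts, $C$ is compact in $\mathcal{W}|_C$, which equals $\tau|_C$, so $C$ is compact in $X$ in its original topology. Since $C$ was an arbitrary closed chain, $X$ is chain-compact. I do not anticipate a real obstacle; the only step requiring care is the equality $\tau|_C=\mathcal{W}|_C$, where one must not forget that subsets of chains are automatically chains, which is exactly what upgrades an arbitrary $\tau$-closed subset of $C$ to a \emph{closed chain} eligible to produce a subbasic element of $\mathcal{W}$.
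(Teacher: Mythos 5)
Your proposal is correct and rests on the same key observation as the paper's proof: any $\tau$-closed subset of a closed chain $C$ is itself a closed chain, hence closed in the weak $chain^\bullet$-topology, so the compactness of $X$ in that topology transfers to $C$. The paper phrases this via centered families of closed subsets of $C$ having nonempty intersection, while you phrase it as the equality $\mathcal{W}|_C=\tau|_C$ together with the fact that a closed subspace of a compact space is compact; these are just two bookkeepings of the same argument.
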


\begin{proof}
Let $C$ be a closed chain in $X$. Consider a centered family
$\lbrace F_\alpha \rbrace$ of closed subsets of $C$. Then every
set $F_\alpha$ is also a chain and is closed in $X$; this means
that $F_\alpha$ is also closed in the weak
chain$^\bullet$-topology on $X$. Since $X$ is a weak
chain$^\bullet$ compact semilattice, $\bigcap\limits_{\alpha \in
\Lambda} F_\alpha \neq \emptyset$.
\end{proof}

From this we obtain the following theorem, which is necessary for
the study of other types of completeness of semilattices,
discussed in the following chapters.

\begin{theorem}\label{th23}
For a ${\uparrow}{\downarrow}$-closed topologized semilattice $X$,
the following statements are equivalent:

1) $X$ is complete;

2) $X$ is weak $chain^\bullet$ compact;

3) $X$ is chain compact.
\end{theorem}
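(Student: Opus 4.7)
The plan is to close a cycle of three implications, each of which is already essentially in hand from the material just established.

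First I would observe that the implication 1) $\Rightarrow$ 2) is exactly the content of Lemma 2.1, so no further work is required there. Next, the implication 2) $\Rightarrow$ 3) is precisely Lemma 2.2, again directly applicable without modification. So the only nontrivial link in the cycle is 3) $\Rightarrow$ 1), which closes the equivalence.

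For the last implication I would simply invoke Theorem 5.4 of \cite{BaBa5}, which asserts exactly that a chain-compact ${\uparrow}{\downarrow}$-closed topologized semilattice is complete. This is the place where the hypothesis that $X$ is ${\uparrow}{\downarrow}$-closed is actually used; without it, chain-compactness would not suffice to conclude that $\inf C$ and $\sup C$ lie in $\overline{C}$ for every non-empty chain $C$. The other two implications hold for arbitrary topologized semilattices.

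The main obstacle, strictly speaking, is nothing internal to this theorem: all the technical work has been pushed into Lemmas 2.1, 2.2 and the cited Theorem 5.4 of \cite{BaBa5}. If one wanted a self-contained argument for 3) $\Rightarrow$ 1), the key step would be to take a non-empty chain $C\subset X$, pass to its closure $\overline{C}$ (still a chain under the ${\uparrow}{\downarrow}$-closedness hypothesis, since $\overline{C}\subset \bigcap_{x\in C}({\uparrow}x\cup{\downarrow}x)$ whenever $C$ is a chain), apply chain-compactness to $\overline{C}$, and then extract $\inf C$ and $\sup C$ as limit points of the down-directed and up-directed nets in $C$ using the compactness of $\overline{C}$. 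But since this is already packaged as Theorem 5.4 of \cite{BaBa5}, I would keep the proof short and purely citational.
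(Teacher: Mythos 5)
Your proposal matches the paper's argument exactly: Theorem \ref{th23} is stated right after Lemmas 2.1 and 2.2 precisely because 1) $\Rightarrow$ 2) and 2) $\Rightarrow$ 3) are those lemmas, and 3) $\Rightarrow$ 1) is the cited Theorem 5.4 of \cite{BaBa5}, which is where the ${\uparrow}{\downarrow}$-closedness hypothesis enters. Your citational proof is correct and is essentially the same as the paper's.
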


A {\it multi-valued map} $\Phi : X \multimap Y$ between sets $X$,
$Y$ is a function assigning to each point $x\in X$ a subset
$\Phi(x)$ of $Y$.  The {\it image} of any set $A \subset X$ under a
multi-valued map $\Phi$ is called the set $\Phi(A) =
\bigcup\limits_{x \in A} \Phi(x)$, the {\it preimage} of any set
$B \subset Y$ is the set $\Phi^{-1}(B) = \lbrace x \in X : \Phi(x)
\cap B \neq \emptyset \rbrace$.
  A multi-valued map $\Phi : X \multimap Y$ between semigroups is
called a {\it multimorphism} if $\Phi(x)\Phi(y) \subset \Phi(xy)$
for any elements $x, y \in X$. Here $\Phi(x)\Phi(y):=\{ab: a\in
\Phi(x), b\in \Phi(y)\}$.

A multi-valued map $\Phi : X \multimap Y$ between topological
spaces is called {\it upper semicontinuous} if for any closed
subset $F\subset Y$ the preimage $\Phi^{-1}(F)$ is closed in $X$.

A subset $F$ of a topological space $X$ is called $T_1$-{\it
closed} (resp. $T_2$-closed) in $X$ if each point $x\in X\setminus
F$ has a (closed) neighborhood, disjoint with $F$.

A multimorphism $\Phi : X \multimap Y$ is called a {\it
$T_i$-multimorphism} for $i\in \{1,2\}$ if for any $x\in X$ the
set $\Phi(x)$ is $T_i$-closed in $Y$.

It was shown in [2] that the completeness of semilattices is
preserved by images under continuous homomorphisms.

 Note that the map $\Phi : X \multimap Y$ where $\Phi(x) = Y$ (between
 semilattices $X$ and $Y$) always is an upper semicontinuous
$T_1$ multimorphism; it shows the images of semilattices under
maps of this type do not preserve completeness. However, a
positive result was achieved by imposing additional algebraic
constraints, which will be shown in Theorem \ref{th25}. To prove
it, we need the following simple proposition.

\begin{proposition}
A ${\uparrow}{\downarrow}$-closed topologized semilattice $X$ is
complete if and only if each non-empty closed chain $C$ contains
$\inf C$ and $\sup C$.
\end{proposition}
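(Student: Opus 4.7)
The plan is to prove both implications, noting that the forward direction is trivial and the reverse direction is the substantive one.

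For the $(\Rightarrow)$ direction, if $X$ is complete then any non-empty chain $C$ has $\inf C, \sup C \in \overline{C}$; when $C$ is additionally closed, $\overline{C}=C$, so $\inf C, \sup C \in C$.

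For the $(\Leftarrow)$ direction, let $C\subset X$ be an arbitrary non-empty chain. The key step is to show that the closure $\overline{C}$ is itself a (closed, non-empty) chain, so that the hypothesis applies to it. Since $X$ is $\uparrow\downarrow$-closed, for every $x\in C$ the set $\updownarrow x={\uparrow}x\cup{\downarrow}x$ is closed, and $C\subseteq \updownarrow x$ because $C$ is a chain. Hence $\overline{C}\subseteq \updownarrow x$ for each $x\in C$, i.e., every element of $\overline{C}$ is comparable with every element of $C$. Now take any $y\in \overline{C}$: by what was just shown, $C\subseteq \updownarrow y$, and since $\updownarrow y$ is closed this yields $\overline{C}\subseteq \updownarrow y$. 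Therefore any two elements of $\overline{C}$ are comparable, so $\overline{C}$ is a chain.

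By hypothesis, the non-empty closed chain $\overline{C}$ contains $a:=\inf \overline{C}$ and $b:=\sup \overline{C}$. It remains to identify these with $\inf C$ and $\sup C$. Clearly $a$ is a lower bound of $C\subseteq \overline{C}$. Conversely, if $\ell\in X$ is any lower bound of $C$, then $C\subseteq{\uparrow}\ell$; as ${\uparrow}\ell$ is closed, $\overline{C}\subseteq{\uparrow}\ell$, so $\ell\le a$. Thus $a=\inf C$ and analogously $b=\sup C$, both lying in $\overline{C}$, which establishes completeness.

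The only real obstacle is the verification that $\overline{C}$ is a chain; everything else is a formal consequence of the definitions and of the closedness of the principal up-sets and down-sets. This obstacle is overcome by exploiting $\uparrow\downarrow$-closedness twice: once to push $C$ into $\updownarrow x$ for $x\in C$, and again to push $C$ into $\updownarrow y$ for $y\in\overline{C}$.
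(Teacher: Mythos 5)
Your proof is correct and follows essentially the same route as the paper: show that $\overline{C}$ is a chain, apply the hypothesis to the non-empty closed chain $\overline{C}$, and use closedness of ${\uparrow}\ell$ (resp.\ ${\downarrow}u$) to identify its smallest and largest elements with $\inf C$ and $\sup C$. The only difference is that you prove directly from ${\uparrow}{\downarrow}$-closedness that $\overline{C}$ is a chain (your two-step $\updownarrow$-argument is valid), whereas the paper simply cites this as Lemma 4.2 of Banakh and Bardyla.
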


\begin{proof} Necessity obviously follows from the definition of completeness.

 Let $C \subset X$ be a non-empty chain. By Lemma 4.2, proved in \cite{BaBa5}, the set $\overline{C}$ is also a
chain. Let $a$ be a the smallest element of $\overline{C}$; the
inclusion $C \subset \overline{C}$ implies that $a$ is the lower
bound of the set $C$. If we assume that there is a lower bound $c$
of chain $C$ such that $c \not\leq a$, then $a \not\in {\uparrow}c
\supset \overline{C}$, a contradiction. Hence, $a = \inf{C}$.
Similarly, we can show that the largest element $b$ of the chain
$\overline{C}$ is  $\sup C$, which completes the proof.
\end{proof}

\begin{theorem}\label{th25} Let $X$ be a complete topologized semilattice, $Y$
be a $\uparrow\downarrow$-closed topologized semilattice and $\Phi
: X \multimap Y$ be an upper semicontinuous $T_1$ multimorphism
such that for any two points $x, y \in X$ inequality $x \leq y$
implies $\Phi(x) \cap {\uparrow}\Phi(y) \subset \Phi(y)$. Then the
semilattice $\Phi(X)$ is complete if and only if the semilattice
$\Phi(x)$ is complete for each $x \in X$.
\end{theorem}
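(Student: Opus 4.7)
The plan is to reduce the theorem to Theorem~\ref{th23}: once $\Phi(X)$ is shown to be ${\uparrow}{\downarrow}$-closed, its completeness will be equivalent to chain-compactness, and it is the latter that is convenient to attack using completeness of the pieces. I would start by recording the ambient structure. From $\Phi(x)\Phi(y)\subset\Phi(xy)$ together with $aa=a$, the set $\Phi(X)=\bigcup_{x\in X}\Phi(x)$ is a subsemilattice of $Y$, and each $\Phi(x)$ is a subsemilattice of $\Phi(X)$; the $T_1$-hypothesis says $\Phi(x)$ is closed in $Y$ and therefore in $\Phi(X)$. Since $Y$ is ${\uparrow}{\downarrow}$-closed, for every $z\in\Phi(X)$ the sets ${\uparrow}z$ and ${\downarrow}z$ in $\Phi(X)$ are traces of closed sets of $Y$, so $\Phi(X)$ is ${\uparrow}{\downarrow}$-closed as well.

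The forward implication is then standard: a closed subsemilattice of a complete topologized semilattice is complete, since any chain $C\subset\Phi(x)$ has $\inf C,\sup C\in\overline{C}^{\Phi(X)}\subset\Phi(x)$ by closedness of $\Phi(x)$, and $\overline{C}^{\Phi(X)}$ coincides with $\overline{C}^{\Phi(x)}$.

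For the backward implication, by Theorem~\ref{th23} it suffices to verify chain-compactness of $\Phi(X)$. Fix a closed chain $C\subset\Phi(X)$ and a centered family $\{F_\alpha\}$ of relatively closed subsets of $C$; enlarging the family if necessary, assume it is closed under finite intersections. Every $F_\alpha$ is a chain and hence a subsemilattice, and by Lemma~4.2 of \cite{BaBa5} so is its $Y$-closure $\overline{F_\alpha}^Y$, which is closed in $Y$. Upper semicontinuity of $\Phi$ together with the multimorphism property make each $\Phi^{-1}(\overline{F_\alpha}^Y)$ a closed subsemilattice of $X$; picking witnesses in $\bigcap_i F_{\alpha_i}$ shows the family is centered. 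Completeness of $X$ yields $\mathcal{W}^\bullet_X$-compactness, so this family has a common point $x^*$; and since $F_\alpha$ is closed in $\Phi(X)$ one has $\overline{F_\alpha}^Y\cap\Phi(X)=F_\alpha$, giving $\Phi(x^*)\cap F_\alpha\neq\emptyset$ for every $\alpha$. Applying the same compactness argument inside the complete fibre $\Phi(x^*)$ to the centered family $\{\Phi(x^*)\cap F_\alpha\}$ of closed subsemilattices delivers a point in $\bigcap_\alpha(\Phi(x^*)\cap F_\alpha)\subset\bigcap_\alpha F_\alpha$, so $C$ is compact.

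The main obstacle is this two-tier compactness argument: one must first descend via $\Phi^{-1}$ to exploit completeness of $X$ and locate $x^*$, and then lift back inside $\Phi(x^*)$ to exploit completeness of that fibre, keeping both families of closed subsemilattices centered at each stage---which is why the preliminary reduction to a filter base is essential. The one place where real care is needed is the identity $\overline{F_\alpha}^Y\cap\Phi(X)=F_\alpha$ that lets one translate closedness in $\Phi(X)$ to closedness in $Y$ for the application of upper semicontinuity.
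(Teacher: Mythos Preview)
Your argument is correct and takes a genuinely different route from the paper's. The paper works directly with a closed chain $C\subset\Phi(X)$ and, via Proposition~2.4, seeks its least and greatest elements. It builds the auxiliary chain $\{\inf S_c : c\in C\}$ in $X$ where $S_c=\Phi_C^{-1}({\uparrow}c)$, takes the extrema of \emph{that} chain, and then pushes the relevant elements of $C$ into the fibres $\Phi_C(\inf S)$ and $\Phi_C(\sup S)$; this last step is exactly where the extra hypothesis $x\le y\Rightarrow\Phi(x)\cap{\uparrow}\Phi(y)\subset\Phi(y)$ is invoked. Your two-tier $\mathcal{W}^\bullet$-compactness argument (first in $X$ to locate $x^*$, then inside the complete fibre $\Phi(x^*)$) bypasses this entirely, and in fact never uses the special hypothesis at all. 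So your proof establishes a strictly stronger statement: the biconditional already holds for any upper semicontinuous $T_1$-multimorphism into a ${\uparrow}{\downarrow}$-closed $Y$. The paper's approach is more constructive---it actually exhibits $\min C$ and $\max C$---while yours is shorter, more conceptual, and shows the extra order-compatibility condition is redundant. Two small points worth making explicit in a write-up: (i) that $\overline{F_\alpha}^Y$ is a chain relies on $Y$ being ${\updownarrow}$-closed (a consequence of ${\uparrow}{\downarrow}$-closed), and (ii) the identity $\overline{F_\alpha}^Y\cap\Phi(X)=F_\alpha$ you flag is just the standard subspace-closure formula, so no delicate argument is needed there.
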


\begin{proof}
Let us first prove sufficiency. Since $\Phi$ is a $T_1$
multi-valued map, $\Phi(x)$ is closed for every $x \in X$ and,
hence, it is complete.

Let $\Phi(x)$ be a complete semilattice for each $x \in X$ and let
$C \subset Y$ be a closed chain. Since the semilattice $Y$ is
$\uparrow\downarrow$-closed, it is sufficient to show that $C$
contains the largest and smallest elements. Note that the semilattice
$\Phi^{-1}(C)$ is closed in $X$ and, hence, it is complete.
Consider the map $\Phi_C : \Phi^{-1}(C) \multimap C$ such that
$\Phi_C(x) = \Phi(x) \cap C$. We show that $\Phi_C$ is an upper
semicontinuous $T_1$ multimorphism. Since $\Phi_C(x)\Phi_C(y):=
(\Phi(x) \cap C)(\Phi(y) \cap C) \subset \Phi(xy) \cap C =
\Phi_C(xy)$ (inclusion holds due to the fact that
$\Phi$ is a multimorphism and $C$ is a semilattice), then $\Phi_C$
is a multimorphism.

Let $F \subset C$ be a closed subset of $C$. Then $F$ is closed in
$Y$ and $\Phi_C^{-1}(F) = \lbrace x \in \Phi^{-1}(C) : \Phi_C(x)
\cap F \neq \emptyset \rbrace = \lbrace x \in \Phi^{-1}(C) :
\Phi(x) \cap C \cap F \neq \emptyset \rbrace = \Phi^{-1}(C)$, that
is $\Phi(C)$ is upper semi-continuous. Finally, $\Phi_C(x) =
\Phi(x) \cap C$ is closed in $\Phi(x)$ and therefore is complete as closed
subsemilattice of semilattice $\Phi(x)$.

For each $c \in C$ consider a closed semilattice $S_c =
\Phi_C^{-1}({\uparrow c})$. Since $X$ is complete, $\inf{S_c} \in
\overline{S_c} = S_c$. Take $y \in C$ such that $c \in \Phi_C(y)$.
Since $\Phi_C(\inf{S_c}) \cap {\uparrow}c \neq \emptyset$, then $c
\in \Phi_C(\inf{S_c})\Phi_C(y) \subset \Phi_C(y\inf{S_c}) =
\Phi(\inf{S_c})$. Note that for $c_1, c_2 \in C$ the inequality
$c_1 \leq c_2$ implies the inclusion $S_{c_2} \subset S_{c_1}$,
whence it follows that $\inf{S_{c_1}} \leq \inf{S_{c_2}}$,i.e. the
set $S = \lbrace S_c \rbrace_{c \in C}$ is a chain. By the completeness of $X$, $\inf{S} \in \overline{S} \subset \Phi^{-1}(C)$.
Since the chains $\Phi_C(\inf{S})$ and $\Phi_C(\sup{S})$ are
complete and closed, they contain the largest and the smallest
elements. Let $a = \min{\Phi_C(\inf{S})}$, $b =
\max{\Phi_C(\sup{S})}$. We show that $a$ and $b$ are the smallest
and largest elements of $C$, respectively. Indeed, suppose that the chain
 $C$ contains the element $c > b$.
Then $\inf{S_c} \leq \sup{S}$ and, by the condition, $c\in
\Phi_C(\inf{S_c}) \cap {\uparrow}\Phi_C(\sup{S}) \subset
\Phi_C(\sup{S})$. Since $b$ is a largest element of
$\Phi_C(\sup{S})$, $c \leq b$.

Now suppose that there is an element $c < a$.  Then $c \in
\Phi_C(\inf{S_c})\Phi_C(\inf{S}) \subset \Phi_C(\inf{S})$ and $c
\geq a$ because of  $a$ is the smallest element of
$\Phi_C(\inf{S})$. The resulting contradictions complete the
proof.
\end{proof}

Note important special case of Theorem \ref{th25}.

\begin{corollary}
Let $X$ be a complete topologized semilattice, let $Y$ be a
$\uparrow\downarrow$-closed topologized semilattice, $\Phi : X
\multimap Y$ be a upper semicontinuous $T_1$ multimorphism such
that $\Phi(x) \cap \Phi(y) = \emptyset$ for $x \neq y$. Then the
semilattice $\Phi(X)$ is complete if and only if the semilattice
$\Phi(x)$ is complete for each $x \in X$.
\end{corollary}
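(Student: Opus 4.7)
The plan is to derive the Corollary as a direct consequence of Theorem~\ref{th25}, so the entire task reduces to checking that the algebraic hypothesis of the Corollary (disjointness of images) implies the algebraic hypothesis of Theorem~\ref{th25}, namely that $x\le y$ forces $\Phi(x)\cap{\uparrow}\Phi(y)\subset\Phi(y)$. Once this is established, Theorem~\ref{th25} applies verbatim and yields both directions of the equivalence, with no further work needed.

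The verification of the hypothesis splits into two cases. If $x=y$, then $\Phi(x)\cap{\uparrow}\Phi(y)\subset\Phi(x)=\Phi(y)$ trivially. The essential case is $x<y$, where I will show the intersection $\Phi(x)\cap{\uparrow}\Phi(y)$ is actually empty. Suppose $z\in\Phi(x)\cap{\uparrow}\Phi(y)$; then there exists $w\in\Phi(y)$ with $w\le z$, which in semilattice form means $wz=w$. Since $\Phi$ is a multimorphism, $w=wz\in\Phi(y)\Phi(x)\subset\Phi(yx)=\Phi(xy)$. The assumption $x\le y$ gives $xy=x$, so $w\in\Phi(x)$. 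Combined with $w\in\Phi(y)$, this produces a point in $\Phi(x)\cap\Phi(y)$, contradicting the disjointness hypothesis since $x\ne y$. Hence $\Phi(x)\cap{\uparrow}\Phi(y)=\emptyset\subset\Phi(y)$, as required.

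The only subtle point — which is really the only nontrivial step — is the translation of the partial order back into the semigroup operation ($w\le z \Leftrightarrow wz=w$) so that the multimorphism inequality $\Phi(y)\Phi(x)\subset\Phi(xy)$ can be brought to bear. Everything else is a direct citation of Theorem~\ref{th25}.
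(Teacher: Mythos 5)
Your proposal is correct and follows essentially the same argument as the paper: take a point of $\Phi(x)\cap{\uparrow}\Phi(y)$, pull down to a point $w\in\Phi(y)$ below it, and use the multimorphism property with $xy=x$ to land $w$ in $\Phi(x)$, contradicting disjointness. Your explicit handling of the trivial case $x=y$ is a minor (and welcome) extra precision, since the paper's claim that the intersection is empty only makes sense for $x<y$; otherwise the two proofs coincide.
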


\begin{proof}
We show that the inequality $x \leq y$ for $x, y \in X$ implies
$\Phi(x) \cap {\uparrow}\Phi(y) = \emptyset$. Suppose the
opposite. Let $a \in \Phi(x) \cap {\uparrow}\Phi(y)$. This means
that there is $b \in \Phi(y)$ such that $b \leq a$. But then
\linebreak$b = ab \in \Phi(x)\Phi(y) \subset \Phi(xy) = \Phi(x)$
and $b \in \Phi(x) \cap \Phi(y)$, which is a contradiction.
\end{proof}

Theorem \ref{th25} allows to generalize the results on the closure
of semilattices.

\medskip
The {\it Lawson number} $\overline{\Lambda(X)}$ of a Hausdorff
topologized semilattice $X$ is defined as the smallest cardinal
$\kappa$ such that for any distinct points $x,y\in X$ there exists
a family $\mathcal{U}$ of closed neighborhoods of $x$ such that
$|\mathcal{U}|\leq \kappa$ and $\bigcap \mathcal{U}$ is a
subsemilattice of $X$ that does not contain $y$. A topologized
semilattice $X$ is $\omega$-Lawson if and only if it is Hausdorff
and has at most countable Lawson number $\overline{\Lambda(X)}$.

A topological space $X$ is called {\it functionally Hausdorff} if
for any two points $x, y \in X$ there is a continuous real-valued
function $f$ such that $f(x) \neq f(y)$.

A space $X$ is {\it sequential} if for non-closed set $A$ there is
a sequence of elements $A$ converging to some point $x \in
\overline{A} \setminus A$.

In \cite{BaBa1, BaBa2, BaBaGu3} it was proved that a complete subsemilattices of
semitopological functionally Hausdorff (sequential Hausdorff,
$\omega$-Lawson) semilattice are closed. It is also known that a
semitopological semilattice is ${\uparrow}{\downarrow}$-closed
\cite{GiHo}. These results, together with Theorem \ref{th25}, allow us
to formulate:

\begin{corollary}
Let $X$ be a complete topologized semilattice, and let $Y$ be an
$\omega$-Lawson (functionally Hausdorff, sequential)
semitopological semilattice, $\Phi : X \multimap Y$ be an upper
semicontinuous $T_1$ multimorphism such that for any two points
$x, y \in X$ inequality  $x \leq y$ implies $\Phi(x) \cap
{\uparrow}\Phi(y) \subset \Phi(y)$ and the semilattice $\Phi(x)$
is complete for each $x \in X$. Then the set $\Phi(X)$ is closed
in $Y$.
\end{corollary}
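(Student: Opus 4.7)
The plan is to deduce this corollary as a direct combination of Theorem \ref{th25} with the cited external results on closedness of complete subsemilattices. The strategy is: first establish that $Y$ satisfies the hypotheses of Theorem \ref{th25}, then apply Theorem \ref{th25} to conclude that $\Phi(X)$ is complete, and finally invoke the quoted theorems from \cite{BaBa1, BaBa2, BaBaGu3} to upgrade completeness of the subsemilattice $\Phi(X)\subset Y$ to closedness in $Y$.

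First I would verify the preparatory facts. By the result of \cite{GiHo} cited just above the statement, every semitopological semilattice is $\uparrow\downarrow$-closed; in particular our $Y$ is $\uparrow\downarrow$-closed, so the structural hypothesis of Theorem \ref{th25} on the target is met. Next, I would check that $\Phi(X)$ really is a subsemilattice of $Y$: for $a\in\Phi(x)$ and $b\in\Phi(y)$ the multimorphism property gives $ab\in\Phi(x)\Phi(y)\subset\Phi(xy)\subset\Phi(X)$, so $\Phi(X)$ is closed under the semilattice operation.

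With these in place, Theorem \ref{th25} applies directly: $X$ is complete, $Y$ is $\uparrow\downarrow$-closed, $\Phi$ is an upper semicontinuous $T_1$ multimorphism satisfying the order-inclusion condition $\Phi(x)\cap{\uparrow}\Phi(y)\subset\Phi(y)$ whenever $x\leq y$, and each fiber $\Phi(x)$ is complete by hypothesis. The theorem then yields that the topologized semilattice $\Phi(X)$ is complete.

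Finally I would invoke the appropriate one of the three closedness theorems from \cite{BaBa1, BaBa2, BaBaGu3} according to which class $Y$ belongs to ($\omega$-Lawson, functionally Hausdorff, or sequential Hausdorff semitopological semilattice): each of them asserts that a complete subsemilattice of such a $Y$ is closed in $Y$. Applying this to the complete subsemilattice $\Phi(X)\subset Y$ gives that $\Phi(X)$ is closed in $Y$, completing the proof. There is no real obstacle here beyond bookkeeping; the only subtlety worth flagging in the write-up is the verification that $\Phi(X)$ is genuinely a subsemilattice (so that the cited external theorems apply to it), which follows immediately from the multimorphism condition.
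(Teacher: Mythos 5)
Your proposal is correct and follows exactly the route the paper intends (the corollary is stated without a separate proof precisely because it is the combination of Theorem \ref{th25}, the $\uparrow\downarrow$-closedness of semitopological semilattices from \cite{GiHo}, and the closedness results for complete subsemilattices in \cite{BaBa1, BaBa2, BaBaGu3}). Your extra check that $\Phi(X)$ is a subsemilattice via the multimorphism property is a sensible piece of bookkeeping and matches the paper's framework.
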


We can also reformulate the question of the
closure of semilattices in terms of preimages under certain maps.

\medskip

A point $x$ of a topological space $X$ is called {\it
$\theta$-adherent point} of the set $A \subset X$ if $A \cap
\overline{U}\neq \emptyset$ for any neighborhood $U$ of $x$.

The following concepts was introduced by N.V. Velichko in \cite{Vel}.

$\bullet$ The {$\theta$-closure} of a subset $A$ of a topological
space $X$ is called the set $\thetacl{A} = \lbrace x \in X: x$ is
a $\theta$-adherent point of $A \rbrace$.

$\bullet$ A subset $A$ of a topological space $X$ is called
$\theta$-closed if $\overline{A}^{\theta} = A$.

\begin{theorem}
Let $X$ be a subsemilattice of a topological semilattice  $Y$. If
there exists a closed homomorphism  $h : X \rightarrow E$ from $X$
to a complete topologized semilattice $E$ such that for each $e
\in E$ the set $h^{-1}(e)$ is $\theta$-closed in $Y$, then $X$ is
closed in $Y$.
\end{theorem}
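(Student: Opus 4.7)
The plan is to argue by contradiction: assume there is a point $y \in \overline{X}^{Y} \setminus X$, and use the three ingredients (closedness of $h$, $\theta$-closedness of fibers, completeness of $E$) to manufacture a contradiction from a single centered family of closed subsets of $E$.

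First, for each open neighborhood $U$ of $y$ in $Y$, I would consider the set $A(U) := h(\overline{U} \cap X) \subset E$. The subset $\overline{U} \cap X$ is closed in the subspace topology of $X$, and since $h$ is a closed homomorphism, $A(U)$ is closed in $E$. Since $y \in \overline{X}$, every $U \cap X$ is non-empty, so each $A(U)$ is non-empty. Because the neighborhood filter $\mathcal{N}(y)$ is downward directed, the family $\{A(U)\}_{U \in \mathcal{N}(y)}$ is directed under inclusion ($V \subset U$ implies $A(V) \subset A(U)$) and in particular centered.

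Next, I would show that $\bigcap_{U} A(U) = \emptyset$. If there were some $e \in \bigcap_{U} A(U)$, then for every neighborhood $U$ of $y$ one could find $x_U \in \overline{U} \cap h^{-1}(e)$, so that $\overline{U} \cap h^{-1}(e) \ne \emptyset$ for every such $U$. This would mean $y$ is a $\theta$-adherent point of $h^{-1}(e)$; by the $\theta$-closedness of the fiber, $y \in h^{-1}(e) \subset X$, contradicting the assumption $y \notin X$.

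The hard step is now to use the completeness of $E$ to obtain $\bigcap_U A(U) \ne \emptyset$, which contradicts what we just proved. The idea is to exploit Lemma~2.1 (complete $\Rightarrow$ weak $chain^\bullet$-compact; equivalently, every centered family of closed chains in $E$ has non-empty intersection) or the stronger $\mathcal{W}^\bullet_E$-compactness (every centered family of closed subsemilattices has non-empty intersection). The difficulty is that each $A(U) = h(\overline{U} \cap X)$ is closed but need not be a chain or a subsemilattice of $E$. I would attempt to replace the family $\{A(U)\}$ by a refined family inside it that \emph{is} of the right form: for each $U$, apply Zorn's lemma to select a minimal non-empty closed subset $M(U) \subset A(U)$ for which the family $\{M(V)\}_{V \subset U}$ remains centered, and argue, using the multiplicative structure of $E$ and the homomorphism property of $h$, that minimality forces each $M(U)$ to be a closed chain (or closed subsemilattice). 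Applying weak $chain^\bullet$- or $\mathcal{W}^\bullet_E$-compactness of $E$ then yields a common point $e^* \in \bigcap_U M(U) \subset \bigcap_U A(U)$.

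The main obstacle will be ensuring that the refined family stays centered while simultaneously taking its members to be chains (or subsemilattices). The propagation of witnesses between different neighborhoods must use the homomorphism $h(x_1 x_2) = h(x_1) h(x_2)$ together with the fact that $X$ is a subsemilattice of $Y$, so that infima of witnesses in $\overline{U_1} \cap X$ and $\overline{U_2} \cap X$ still land in $\overline{U_1 \cap U_2} \cap X$ (or at least witness membership in $A(U_1 \cap U_2)$). Once the common point $e^*$ is obtained, the argument of Step~2 applied to $e^*$ forces $y \in h^{-1}(e^*) \subset X$, giving the desired contradiction.
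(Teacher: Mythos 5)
Your first two steps are sound: the sets $A(U)=h(\overline{U}\cap X)$ are indeed non-empty closed subsets of $E$ forming a centered (even downward directed) family, and the $\theta$-closedness of the fibers correctly rules out a common point of all the $A(U)$, exactly as you argue. The genuine gap is your third step, which is where the whole difficulty of the theorem lives. Completeness of $E$ only yields the finite-intersection property for centered families of closed \emph{chains} (weak chain$^{\bullet}$-compactness) or of closed \emph{subsemilattices} ($\mathcal{W}^{\bullet}_E$-compactness); it gives nothing for arbitrary closed sets, since a complete topologized semilattice need not be compact. Your proposed repair --- choose by Zorn's lemma a minimal non-empty closed $M(U)\subset A(U)$ keeping the family centered, and argue that minimality forces $M(U)$ to be a chain or subsemilattice --- is a hope rather than a proof, with two concrete defects. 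First, Zorn's lemma cannot be applied: to verify its chain condition you would need every decreasing family of closed sets that keeps the system centered to admit a non-empty closed lower bound with the same property, and that is precisely the kind of intersection statement you lack (it is what you are trying to prove). Second, no argument is given, and none is apparent, that minimality together with the multiplication forces $M(U)$ to be a chain or a subsemilattice. Moreover, your auxiliary claim that products of witnesses from $\overline{U_1}\cap X$ and $\overline{U_2}\cap X$ land in $\overline{U_1\cap U_2}\cap X$ is false as stated; to make multiplicativity interact with neighborhoods of $y$ you must use that $Y$ is a \emph{topological} semilattice and that $yy=y$, choosing $U$ with $\overline{U}\,\overline{U}$ inside a prescribed closed neighborhood, a step absent from your sketch.

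In effect, the part you leave open is the substance of the result, and the paper does not attack it head-on. Its proof reduces to a known theorem: after assuming $h$ surjective (its image is closed, and completeness passes to closed subsemilattices), it defines the multi-valued map $\Phi\colon E\multimap Y$, $\Phi(e)=h^{-1}(e)$, and checks that $\Phi$ is a multimorphism (because $h$ is a homomorphism), upper semicontinuous (because $\Phi^{-1}(F)=h(F\cap X)$ and $h$ is closed), and $T_2$ (because the fibers are $\theta$-closed); then the closedness of $\Phi(E)=X$ in $Y$ follows from Theorem~2.1 of \cite{BB1}, which asserts exactly that images of complete topologized semilattices under upper semicontinuous $T_2$-multimorphisms into topological semilattices are closed. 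If you want a self-contained argument along your lines, you would essentially have to reprove that theorem, which requires directed and chain constructions in $E$ built from the joint continuity of multiplication in $Y$, not a minimality trick.
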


\begin{proof}
Since $h$ is a closed map, the image $h(X)$ is closed in
$E$. Given that completeness is inherited by closed
subsemilattices, we can assume, without loss of generality,
that $h$ is a surjective map. We define a multi-valued map $\Phi :
E \multimap Y$ as follows: $\Phi(e) = h^{-1}(e)$. We show that
$\Phi$ is an upper semicontinuous $T_2$ multimorphism.

Claim that $\Phi^{-1}(F) = h(F)$ for each $F \subset X$.

 Take an element $x \in \Phi^{-1}(F)$. By
definition of the preimage of a set under a multivalued map $\Phi(x) \cap F
= h^{-1}(x) \cap F \neq \emptyset$ and, hence, there is $z \in F$
such that $h(z) = x$ and $x \in h(F)$. Now take $e \in h(F)$ and
find $y \in F$ such that $h(y) = e$. This means that $y \in F \cap
h^{-1}(e)= F \cap \Phi(e)$, so $e \in \Phi^{-1}(F)$; the resulting
inclusions prove the equality $\Phi^{-1}(F) = h(F)$.

Since $h$ is a closed map, $\Phi^{-1}(F) = \Phi^{-1}(F \cap X) =
h(F \cap X)$ is closed in $E$ for every closed set $F \subset Y$,
so $\Phi$ is upper semicontinuous. Since $\Phi(e) = h^{-1}(e)$ is
$\theta$-closed in $Y$, the multi-valued map $\Phi$ has the
property $T_2$.

Now we check that $\Phi$ is a multimorphism. Take $e_1, e_2 \in E$
and $x_1 \in \Phi(e_1) = h^{-1}(e_1), x_2 \in \Phi(e_2) =
h^{-1}(e_2)$. Since $h$ is a homomorphism, $e_1e_2 = h(x_1)h(x_2)
= h(x_1x_2)$ and  $x_1x_2 \in h^{-1}(e_1e_2)$. Then we have that
$\Phi(e_1)\Phi(e_2) = h^{-1}(e_1)h^{-1}(e_2) \subset
h^{-1}(e_1e_2)=\Phi(e_1e_2)$, that is, $\Phi$ is a multimorphism.

To complete the proof, it remains only to note that the closedness of
$\Phi(E) = X$ in $Y$ now follows from Theorem 2.1 in \cite{BB1}.
\end{proof}

\begin{corollary}
Let $X$ be a subsemilattice of a regular semitopological
semilattice $Y$. If there exists a closed homomorphism $h : X
\rightarrow E$ from $X$ into a complete topologizing semilattice
$E$ such that for each $e \in E$ the set $h^{-1}(e)$ is closed in
$Y$, then  $X$ is closed in $Y$.
\end{corollary}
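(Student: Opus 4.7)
The plan is to deduce the corollary directly from Theorem 2.6 by using regularity of $Y$ to convert ``closed'' into ``$\theta$-closed'' for the fibers of $h$.

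First I would prove the routine topological lemma: in any regular space $Z$, every closed set $F$ is $\theta$-closed. Given $x \notin F$, the open set $Z \setminus F$ is a neighborhood of $x$, so regularity supplies an open set $U$ with $x \in U$ and $\overline{U} \subseteq Z \setminus F$; therefore $\overline{U} \cap F = \emptyset$, which means $x \notin \thetacl{F}$. The reverse inclusion $\overline{F} \subseteq \thetacl{F}$ is automatic, so closedness and $\theta$-closedness coincide in regular spaces.

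Applying this lemma to the fibers $h^{-1}(e)$, which are by hypothesis closed in the regular semitopological semilattice $Y$, I obtain that each $h^{-1}(e)$ is $\theta$-closed in $Y$. All the remaining hypotheses of Theorem 2.6 are part of the corollary's hypotheses ($X$ a subsemilattice of $Y$, $h$ a closed homomorphism onto the complete topologized semilattice $E$). The conclusion that $X$ is closed in $Y$ then follows.

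The one subtle point I anticipate is the apparent gap between ``topological semilattice'' in Theorem 2.6 and the weaker ``semitopological semilattice'' in the corollary. I would resolve it by noting that the proof of Theorem 2.6 only uses the homomorphism property of $h$ and upper semicontinuity of $\Phi(e) = h^{-1}(e)$ to establish that $\Phi$ is an upper semicontinuous $T_2$ multimorphism; joint continuity of multiplication on $Y$ is not actually invoked, and the concluding appeal to Theorem 2.1 of \cite{BB1} is valid in the semitopological setting. This bookkeeping is the main, and essentially the only, obstacle.
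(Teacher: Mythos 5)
Your proposal is correct and follows essentially the same route as the paper, whose entire proof is the one-line observation that in regular spaces the closure and $\theta$-closure operators coincide, so the hypothesis on the fibers $h^{-1}(e)$ reduces the corollary to the preceding theorem. Your additional remark about the ``topological'' versus ``semitopological'' discrepancy addresses a point the paper silently glosses over, and your resolution (the theorem's proof only needs $h$ to be a homomorphism, upper semicontinuity of the fiber map, and the cited closedness result for upper semicontinuous $T_2$ multimorphisms, which applies in the semitopological setting) is the right reading.
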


\begin{proof}
It immediately follows from the fact that in regular spaces closure and $\theta$-closure operators coincide.
\end{proof}

\section{The $\delta$-completeness of topologized semilattices}

A point $x$ of topological space $X$ is called {\it
$\delta$-adherent point} of a set $A \subset X$ if $A \cap
Int\overline{U}\neq\emptyset$ for any neighborhood $U$ of $x$.

The \textit{$\delta$-closure} of a subset $A$ of a topological
space $X$ is called the set $\deltacl{A} = \lbrace x \in X: x$ is
a
 $\delta$-adherent point of $A \rbrace$.

A subset $A$ of a topological space $X$ is called $\delta$-closed,
if $\overline{A}^{\delta} = A$.

The concept of $\delta$-closure was introduced by N.V. Velichko in
\cite{Vel}. It is also proved that the intersection and finite union of
$\delta$-closed sets is $\delta$-closed. Obviously, the empty set
and the entire space are $\delta$-closed sets. It follows that for
any topological space $(X, \tau)$ there exists a topology
$\tau_\delta$ such that closed (in $(X, \tau_\delta)$) sets are
exactly $\delta$-closed sets of the space $(X, \tau)$. It is easy to check
that the $\delta$-closure of a set is a $\delta$-closed set.
It follows that $\deltacl{A}$ is the intersection of all
$\delta$-closed sets containing  $A$. Now it is easy to see that
the closure operator in $(X, \tau_\delta)$ is the same as the
$\delta$-closure operator in $(X, \tau)$.

Complements to $\delta$-closed sets are called $\delta$-open sets.

\begin{definition}
A topologized semilattice $X$ is called {\it $\delta$-complete} if
each non-empty chain $C\subset X$ has $\inf{C}$ and $\sup{C}$ that
belong to the $\delta$-closure $\deltacl{C}$ of the chain $C$ in
$X$.
\end{definition}

It follows that a topologized semilattice $(X, \tau)$ is
$\delta$-complete if and only if the semilattice  $(X,
\tau_\delta)$ is complete.

\begin{definition} A topologized semilattice $X$ is called
$\delta$-${\uparrow}{\downarrow}$-\textit{closed} if for each $x
\in X$ sets ${\uparrow}x$ and ${\downarrow}x$ are $\delta$-closed
in $X$.
\end{definition}

\begin{definition}
A {\it weak $\delta$-$chain^\bullet$-topology} on a topologized
semilattice $X$ is called a topology generated by a subbase
consisting of complements to $\delta$-closed chains in $X$.
\end{definition}

\begin{definition}
The topologized semilattice $X$ is called {\it weak
$\delta$-$chain^\bullet$ compact}, if $X$ is compact in its weak
$\delta$-$chain^\bullet$ topology.
\end{definition}

We now formulate an analog of Theorem 1.6 for $\delta$-complete
semilattices.

\begin{theorem}
For a $\delta$-${\uparrow}{\downarrow}$-closed topologized
semilattice $(X, \tau)$ the following conditions are equivalent:

1) $X$ is $\delta$-complete;

2) $X$ is weak $\delta$-$chain^\bullet$ compact;

3) any $\delta$-open cover of $\delta$-closed (in $X$) chain $C$
contains finite subcover.
\end{theorem}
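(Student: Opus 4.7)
The plan is to reduce the theorem to Theorem~\ref{th23} applied to the auxiliary topology $\tau_\delta$ on $X$, whose closed sets are exactly the $\delta$-closed sets of $(X,\tau)$. Since the semilattice order is defined purely algebraically by $x\le y \leftrightarrow xy=x=yx$, the chains of $X$ and the principal sets ${\uparrow}x$, ${\downarrow}x$ do not depend on the topology; hence $(X, \tau_\delta)$ is a topologized semilattice with the same underlying ordered structure as $(X, \tau)$, and it makes sense to ask whether the three properties of Theorem~\ref{th23} hold in $(X, \tau_\delta)$.

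Next I would verify that each condition in the statement is the corresponding condition of Theorem~\ref{th23} for the space $(X, \tau_\delta)$. Being $\delta$-${\uparrow}{\downarrow}$-closed means that ${\uparrow}x$ and ${\downarrow}x$ are $\delta$-closed in $(X,\tau)$, which by the very construction of $\tau_\delta$ is the same as $(X, \tau_\delta)$ being ${\uparrow}{\downarrow}$-closed. Condition 1) coincides with completeness of $(X, \tau_\delta)$ — this is the equivalence observed immediately after the definition of $\delta$-completeness, using that the closure operator in $(X, \tau_\delta)$ equals the $\delta$-closure operator in $(X, \tau)$. For condition 2), the weak $\delta$-$chain^\bullet$-topology of $(X,\tau)$ is generated by complements of $\delta$-closed chains of $(X,\tau)$, which are precisely the complements of closed chains of $(X, \tau_\delta)$; hence this topology coincides with the weak $chain^\bullet$-topology of $(X, \tau_\delta)$, and weak $\delta$-$chain^\bullet$ compactness of $(X,\tau)$ is the same as weak $chain^\bullet$ compactness of $(X, \tau_\delta)$. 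Finally, condition 3) asserts exactly that every $\delta$-closed chain $C\subset X$ is compact with respect to covers by $\delta$-open sets, which via the standard criterion for compactness of a closed subspace is the same as saying that each closed chain of $(X, \tau_\delta)$ is compact, i.e.\ $(X, \tau_\delta)$ is chain-compact.

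Once these four translations are established, the equivalence $1)\Leftrightarrow 2)\Leftrightarrow 3)$ is immediate from Theorem~\ref{th23} applied to the ${\uparrow}{\downarrow}$-closed topologized semilattice $(X, \tau_\delta)$.

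The only point requiring genuine care is the reformulation of condition 3): one must make sure that covering a $\delta$-closed chain $C$ by $\delta$-open subsets of $X$ really amounts to covering it by open subsets of the ambient space $(X,\tau_\delta)$, but this is nothing more than the definition of $\tau_\delta$-open sets together with the standard fact that a closed subset of a topological space is compact if and only if every cover of it by open sets of the ambient space has a finite subcover. Everything else is a straight change of topology, and the substantive work has already been carried out in Theorem~\ref{th23}.
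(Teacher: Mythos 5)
Your proposal is correct and follows essentially the same route as the paper: pass to the auxiliary topology $\tau_\delta$, observe that $\delta$-closed sets, the $\delta$-closure operator, and the order-theoretic notions translate verbatim, and then invoke Theorem~\ref{th23} for the ${\uparrow}{\downarrow}$-closed semilattice $(X,\tau_\delta)$. You spell out the four translations more explicitly than the paper's rather terse proof does, which is a welcome clarification rather than a deviation.
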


\begin{proof}
Since the closure operator in the semilattice  $(X, \tau_\delta)$
coincides with the $\delta$-closure operator in  $(X, \tau)$, the
semilattice  $(X, \tau_\delta)$ is complete. Recall that closed
sets in $(X, \tau_\delta)$ are exactly  $\delta$-closed sets in
$(X, \tau)$. The statement of this theorem now follows from Theorem 2.3.
\end{proof}

\section{The $\theta$-completeness of topologized semilattices}

\begin{definition}
A topologized semilattice $X$ is called
\textit{$\theta$-complete}, if for each non-empty chain  $C
\subset X$  $\inf{C} \in \thetacl{C}$ and $\sup{C} \in
\thetacl{C}$.
\end{definition}

\begin{definition} A topologized semilattice $X$ is called
\textit{$\theta$-${\uparrow}{\downarrow}$-closed}
(\textit{$\theta$-${\updownarrow}$-closed}), if for any element $x
\in X$ sets ${\uparrow}x$ and ${\downarrow}x$ (set
${\updownarrow}x$) are $\theta$-closed.
\end{definition}

Note that unlike the closure
 and $\delta$-closure operators, the $\theta$-closure operator is not necessarily idempotent, which makes the class of $\theta$-complete semilattices a bit more interesting.

\begin{definition}
Let $X$ be a topologized semilattice and $D \subset X$ is up
(down)-directed. We say that  $D$ {\it up-$\theta$-converges}
(\textit{down-$\theta$-converges}) to the point $x \in X$, if for
any neighborhood $U$ of $x$ there is $d \in D$ such that $D \cap
{\uparrow}d \subset \overline{U}$ ($D \cap {\downarrow}d \subset
\overline{U}$).
\end{definition}

\begin{lemma}\label{up_conv_lemma}
Let $X$ be a $\theta$-complete topologized semilattice. Then any
up-directed set $D \subset X$ up-$\theta$-converges to $\sup{D}$.
\end{lemma}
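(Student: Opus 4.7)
The plan is to prove this by contradiction. Writing $s := \sup D$ (which can be produced by applying $\theta$-completeness to a maximal chain in $D$ and arguing, via the semilattice meet, that its supremum is an upper bound of the whole of $D$), suppose that $D$ fails to up-$\theta$-converge to $s$. Then there is an open neighborhood $U$ of $s$ such that for every $d \in D$ the tail $D \cap {\uparrow}d$ is not contained in $\overline{U}$; equivalently, $D' := D \setminus \overline{U}$ is cofinal in $D$.

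Next I would verify that $D'$ inherits up-directedness from $D$: for $d_1, d_2 \in D'$, up-directedness of $D$ yields some $f \in D$ with $f \ge d_1, d_2$, and cofinality of $D'$ in $D$ then produces $e \in D'$ with $e \ge f \ge d_1, d_2$. Since $D$ and $D'$ have the same upper bounds in $X$, we also obtain $\sup D' = s$.

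The heart of the proof is to extract a chain $C \subset D'$ with $\sup C = s$. I would construct $C$ by transfinite recursion: at a successor stage $\alpha+1$, use up-directedness of $D'$ to pick $c_{\alpha+1} \in D'$ that dominates $c_\alpha$ together with some element of $D'$ witnessing that the partial supremum is not yet $s$; at a limit stage $\lambda$, invoke $\theta$-completeness to form $t_\lambda := \sup_{\beta < \lambda} c_\beta$ in $X$ (as $\{c_\beta\}_{\beta < \lambda}$ is a chain), and then choose $c_\lambda \in D'$ dominating $t_\lambda$. Handling the limit stages is the main obstacle, since up-directedness only guarantees common upper bounds for finite subsets; here the semilattice meet, together with cofinality of $D'$ in $D$, are the tools needed to argue that either an appropriate $c_\lambda$ exists in $D'$, or else the partial chain has already achieved supremum $s$ so that the recursion terminates.

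Once a chain $C \subset D'$ with $\sup C = s$ is in hand, $\theta$-completeness places $s$ in $\thetacl{C}$. Since $U$ is an open neighborhood of $s$, this forces $\overline{U} \cap C \neq \emptyset$; but by construction $C \subset D' \subset X \setminus \overline{U}$, which is the desired contradiction.
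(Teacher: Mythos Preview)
Your overall skeleton---assume failure of up-$\theta$-convergence, fix a neighborhood $U$ of $s=\sup D$ witnessing this, and pass to the cofinal up-directed set $D'=D\setminus\overline{U}$ with $\sup D'=s$---is exactly the paper's argument. The paper then finishes in one line: it simply asserts that $\theta$-completeness gives $s=\sup D'\in\thetacl{D'}$, which contradicts $D'\cap\overline{U}=\emptyset$. You, more scrupulously, notice that $\theta$-completeness is literally stated only for chains, and therefore try to manufacture a chain $C\subset D'$ with $\sup C=s$ before invoking it. That extra step is where your argument breaks down.

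The transfinite recursion you describe need not go through at limit stages. Having built a chain $(c_\beta)_{\beta<\lambda}$ inside $D'$ and formed $t_\lambda=\sup_{\beta<\lambda}c_\beta\in X$, you want some $c_\lambda\in D'$ with $c_\lambda\ge t_\lambda$; but up-directedness only supplies upper bounds in $D'$ for \emph{finite} subsets of $D'$, and $t_\lambda$ may lie outside $D$ altogether. Your fallback---``or else the partial chain has already achieved supremum $s$''---is also unjustified: $t_\lambda<s$ is perfectly compatible with no element of $D'$ dominating $t_\lambda$. Concretely, take the meet-semilattice $P(\omega_1)$ and $D'=[\omega_1]^{<\omega}$; then $\sup D'=\omega_1$, yet every chain in $D'$ has supremum a countably infinite set, so the recursion stalls at stage $\omega$ with $t_\omega$ neither equal to $\omega_1$ nor below any finite set. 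The same example defeats your opening claim that $\sup D$ can be obtained as the supremum of a maximal chain in $D$: maximal chains in up-directed sets are not cofinal in general, and the ``semilattice meet'' gives you no leverage here.

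In short, you and the paper agree up to the formation of $D'$; the paper then applies $\theta$-completeness directly to the up-directed set (glossing over the chain-versus-directed distinction), while your attempt to reduce honestly to the chain case does not succeed. If you want a valid proof along the paper's lines, the cleanest fix is to restrict the statement to chains $D$ (which is all the later applications in the paper need), in which case $D'$ is again a chain and $\theta$-completeness applies verbatim.
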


\begin{proof}
Suppose the opposite. Let $D \subset X$ be a up-directed set does
not up-$\theta$-converge  to $\sup{D}$. Then there exists a
neighborhood $U$ of the point $\sup{D}$ that the set $(D \cap
{\uparrow}d) \setminus \overline{U} \neq \emptyset$ for each $d
\in D$.

We claim that the set $E = D \setminus \overline{U}$ is
up-directed. Indeed, let $e_1, e_2 \in E$; then there is  $d \in
D$ such that $d \geq e_1$ and $d \geq e_2$. Since $(D \cap
{\uparrow}d) \setminus \overline{U} \subset E$, there is  $e' \in
E$ such that $e' \geq d \geq e_1$ and $e' \geq d \geq e_2$, as
required. Note that $\sup{D} = \sup{E}$.

Since $X$ is a $\theta$-complete semilattice, $\sup{E} = \sup{D}
\in \thetacl{E}$. But, on the other hand, $E \cap \overline{U} =
\emptyset$, hence, $\sup{E} \not\in \thetacl{E}$, because
$\sup{E}$ is an inner point of $\overline{U}$.
\end{proof}

The following statement is proved in exactly the same way.

\begin{lemma}\label{down_conv_lemma}
Let $X$ be a $\theta$-complete topologized semilattice. Then any
down-directed set $D \subset X$ down-$\theta$-converges to $\inf
D$.
\end{lemma}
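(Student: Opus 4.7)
The plan is to mirror the proof of Lemma~\ref{up_conv_lemma} verbatim, only reversing all the order-theoretic directions. Assume for contradiction that some down-directed $D \subset X$ does not down-$\theta$-converge to $\inf D$. Then there exists a neighborhood $U$ of $\inf D$ such that $(D \cap {\downarrow}d) \setminus \overline{U} \neq \emptyset$ for every $d \in D$. Set $E = D \setminus \overline{U}$; the goal is to derive a contradiction by applying the $\theta$-completeness hypothesis to $E$.

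First I would check that $E$ is down-directed: given $e_1, e_2 \in E \subset D$, pick $d \in D$ with $d \leq e_1$ and $d \leq e_2$, and then use the assumption on $U$ to find some $e' \in D \cap {\downarrow}d$ outside $\overline{U}$; this $e'$ lies in $E$ and satisfies $e' \leq e_1$, $e' \leq e_2$.

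Next I would verify the key identity $\inf E = \inf D$. The inclusion $E \subset D$ gives $\inf E \geq \inf D$. For the reverse inequality, observe that for every $d \in D$ the assumption furnishes some $e \in E$ with $e \leq d$, so any lower bound of $E$ is also a lower bound of $D$; in particular $\inf E \leq \inf D$.

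Finally, $\theta$-completeness of $X$ applied to the chain/down-directed set $E$ (strictly, the lemma is about chains, but the proof only needs the down-directedness of $E$ together with the existence and $\theta$-adherence of $\inf E$; alternatively one can run the argument over a maximal chain in $E$, exactly as in Lemma~\ref{up_conv_lemma}) would give $\inf D = \inf E \in \thetacl{E}$. But $E \cap \overline{U} = \emptyset$ by construction, so $U$ is a neighborhood of $\inf D$ whose closure misses $E$, contradicting $\inf D \in \thetacl{E}$. The only real subtlety — and the step I would check most carefully — is the identity $\inf D = \inf E$, since it is the one place where the assumption $(D \cap \downarrow d)\setminus\overline{U}\neq\emptyset$ must be invoked; everything else is a mechanical dualization of the up-directed argument.
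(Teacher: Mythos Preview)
Your proposal is correct and matches the paper exactly: the paper gives no separate proof, merely stating that the lemma ``is proved in exactly the same way'' as Lemma~\ref{up_conv_lemma}, i.e., by the order-dualization you carry out. If anything you are more careful than the paper's own proof of Lemma~\ref{up_conv_lemma}, since you justify $\inf E = \inf D$ explicitly (the paper simply asserts the dual equality $\sup D = \sup E$ without argument) and you flag the chain-versus-directed-set subtlety in applying $\theta$-completeness, which is equally present in the original up-directed argument.
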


\begin{lemma}\label{chain_lemma}
Let $X$ be a $\theta$-$\updownarrow$-closed semilattice. Then for
any chain $C \subset X$, the set $\thetacl{C}$ is a chain.
\end{lemma}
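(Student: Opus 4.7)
The plan is to show that any two points $a, b \in \thetacl{C}$ are comparable, by extracting comparability from the hypothesis that each set $\updownarrow x$ is $\theta$-closed. The main tool I would use repeatedly is the simple monotonicity fact: if $A \subset B$ and $B$ is $\theta$-closed in $X$, then $\thetacl{A} \subset B$. This is immediate from the definition, because any $\theta$-adherent point $x$ of $A$ satisfies $A \cap \overline{U} \neq \emptyset$ for every neighborhood $U$ of $x$, hence so does $B$, so $x \in \thetacl{B} = B$. Note that this does not require idempotence of $\thetacl{\cdot}$, which is important since the $\theta$-closure operator is not idempotent in general.

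First I would observe that, since $C$ is a chain, for each fixed $x \in C$ every element of $C$ is comparable with $x$, i.e.\ $C \subset \updownarrow x$. Because $\updownarrow x$ is $\theta$-closed by hypothesis, the monotonicity principle above yields $\thetacl{C} \subset \updownarrow x$ for every $x \in C$. Therefore any $a \in \thetacl{C}$ is comparable with every $x \in C$.

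The key step is then to bootstrap from this to comparability between two points of $\thetacl{C}$. Fix $a \in \thetacl{C}$. The previous paragraph says $a \in \updownarrow x$ for every $x \in C$, which is the same as saying $x \in \updownarrow a$ for every $x \in C$; hence $C \subset \updownarrow a$. Applying the monotonicity principle one more time to the $\theta$-closed set $\updownarrow a$ (here we use again that $X$ is $\theta$-$\updownarrow$-closed), we conclude $\thetacl{C} \subset \updownarrow a$. Thus for any other $b \in \thetacl{C}$ we have $b \in \updownarrow a$, meaning $a$ and $b$ are comparable, so $\thetacl{C}$ is a chain. I do not anticipate a real obstacle here; the only thing one has to be careful about is avoiding any appeal to idempotence of $\thetacl{\cdot}$, which is why the argument is arranged to use only the one-sided inclusion stated above.
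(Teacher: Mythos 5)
Your proof is correct and is essentially the paper's own argument in direct form: the paper applies $\theta$-closedness of the sets ${\updownarrow}z$ in exactly the same two places (once anchored at a point of $C$, once at a point of $\thetacl{C}$), only packaged as a proof by contradiction with explicit closed neighborhoods, which is just your monotonicity fact unpacked. Your observation that idempotence of the $\theta$-closure is never needed is accurate, but the route is the same.
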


\begin{proof}
Suppose the opposite, let the set $\thetacl{C}$ contain
incomparable elements $x$ and $y$. Since $x \not\in
{\updownarrow}y$ and ${\updownarrow}y$ is $\theta$-closed, there
is a neighborhood $U$ of the point $x$ such that $\overline{U}
\cap {\updownarrow}y = \emptyset$. Since $x \in \thetacl{C}$,
there is  $z \in \overline{U} \cap C$. Since $z \not\in
{\updownarrow}y$, we have $y \not \in {\updownarrow}z$. Then there
is a neighborhood $V$ of the point $y$ such that $\overline{V}
\cap {\updownarrow}z = \emptyset$, which impossible, since $C
\subset {\updownarrow}z$ and $y \in \thetacl{C}$.
\end{proof}

\begin{theorem}\label{chain_thetacl}
Let $X$ be a $\theta$-complete,
$\theta$-${\uparrow}{\downarrow}$-closed topologized semilattice,
and $C \subset X$ be a chain. Then $\thetacl{C}$ is a
$\theta$-closed set.
\end{theorem}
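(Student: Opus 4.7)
The plan is to show $\thetacl{\thetacl{C}} \subset \thetacl{C}$. A standard property of the $\theta$-closure operator due to Velichko is that finite unions of $\theta$-closed sets are $\theta$-closed, so each ${\updownarrow}y = {\uparrow}y \cup {\downarrow}y$ is $\theta$-closed; in particular $X$ is $\theta$-${\updownarrow}$-closed, and Lemma \ref{chain_lemma} gives that $\thetacl{C}$ is a chain. Fix $x \in \thetacl{\thetacl{C}}$. Repeating the argument of Lemma \ref{chain_lemma} one level up shows $x$ is comparable with every element of $\thetacl{C}$: if some $y \in \thetacl{C}$ were incomparable with $x$, then $\theta$-closedness of ${\updownarrow}y$ would supply a neighborhood $U$ of $x$ with $\overline{U} \cap {\updownarrow}y = \emptyset$, while $x \in \thetacl{\thetacl{C}}$ would yield $z \in \overline{U} \cap \thetacl{C}$ forced to lie in ${\updownarrow}y$ by the chain property, a contradiction.

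By this comparability, $C$ splits as $C = C_{-} \cup C_{+}$ with $C_{-} := \{c \in C : c \leq x\}$ and $C_{+} := \{c \in C : c \geq x\}$. If $C_{-} = \emptyset$ then $x \leq \inf C$; if the inequality is strict, the $\theta$-closed set ${\uparrow}\inf C$ contains $\thetacl{C}$ but not $x$, yielding a neighborhood of $x$ whose closure misses $\thetacl{C}$ and contradicting $x \in \thetacl{\thetacl{C}}$. Hence $x = \inf C \in \thetacl{C}$, and the case $C_{+} = \emptyset$ is symmetric.

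Assume now that both $C_{-}$ and $C_{+}$ are nonempty and set $a := \sup C_{-}$, $b := \inf C_{+}$, so that $a \leq x \leq b$. If $x = a$, I apply Lemma \ref{up_conv_lemma} to the up-directed chain $C_{-}$: for any neighborhood $V$ of $x = \sup C_{-}$, a tail of $C_{-}$ lies in $\overline{V}$, in particular $\overline{V} \cap C \neq \emptyset$, so $x \in \thetacl{C}$. The case $x = b$ is handled symmetrically via Lemma \ref{down_conv_lemma}.

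The main obstacle is ruling out the subcase $a < x < b$. Using $\theta$-closedness of ${\downarrow}a$ and ${\uparrow}b$, I pick neighborhoods $U_1, U_2$ of $x$ with $\overline{U_1} \cap {\downarrow}a = \emptyset$ and $\overline{U_2} \cap {\uparrow}b = \emptyset$, and set $U := U_1 \cap U_2$. The key observation is that \emph{any} point $z \in X$ with $a < z < b$ can be separated from $C$ by the same scheme, since $C_{-} \subset {\downarrow}a$ and $C_{+} \subset {\uparrow}b$; hence no such $z$ lies in $\thetacl{C}$. Combined with the comparability of elements of $\thetacl{C}$ with $x$, this forces $\thetacl{C} \subset {\downarrow}a \cup {\uparrow}b$, and therefore
$$\overline{U} \cap \thetacl{C} \subset (\overline{U_1} \cap {\downarrow}a) \cup (\overline{U_2} \cap {\uparrow}b) = \emptyset,$$
contradicting $x \in \thetacl{\thetacl{C}}$ and completing the argument.
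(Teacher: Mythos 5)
Your proof is correct and follows essentially the same route as the paper: you form the same points $a=\sup({\downarrow}x\cap C)$ and $b=\inf({\uparrow}x\cap C)$ inside $\thetacl{C}$ and reach the contradiction by separating $x$ from the $\theta$-closed set ${\downarrow}a\cup{\uparrow}b$, merely repackaging the paper's second, nested separation (its point $y\in\overline{U}\cap\thetacl{C}$) as the observation that no point strictly between $a$ and $b$ can lie in $\thetacl{C}$, and handling the boundary cases explicitly instead of assuming $x\notin\thetacl{C}$ from the start. One small wording fix: the inclusion $\thetacl{C}\subset{\downarrow}a\cup{\uparrow}b$ is justified by $a,b\in\thetacl{C}$ (from $\theta$-completeness) together with the chain property of $\thetacl{C}$ — or, even more directly, by the fact that ${\downarrow}a\cup{\uparrow}b$ is a $\theta$-closed set containing $C$ — rather than by comparability of points of $\thetacl{C}$ with $x$.
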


\begin{proof}
Assume that $\thetacl{C}$ is not $\theta$-closed and there exists
$x \in \overline{C}^{\theta^2} \setminus \thetacl{C}$ (where $\overline{C}^{\theta^2} = \thetacl{\thetacl{C}}$). Note that,
by Lemma \ref{chain_lemma}, the sets $\thetacl{C}$ and
$\overline{C}^{\theta^2}$ are chains. By $\theta$-completeness,
$\inf{C} \in \thetacl{C}$ and $\sup{C} \in \thetacl{C}$. Since $X$
is a $\theta$-${\uparrow}{\downarrow}$-closed semilattice,
$\overline{C}^{\theta^2} \subset {\uparrow}\inf{C} \cap
{\downarrow}\sup{C}$, hence, $\inf{C} \leq x$ and $\sup{C} \geq
x$. By choice $x$, $\inf{C} \neq x$ and $\sup{C} \neq x$, hence,
$\inf{C} < x < \sup{C}$. Note that there are elements $c_1, c_2
\in C$ such that $c_1 < x < c_2$ and, hence, sets ${\downarrow}x
\cap C$ and ${\uparrow}x \cap C$ non empty. Since $X$ is a
$\theta$-complete semilattice, $a:= \sup{({\downarrow}x \cap C)}
\in \thetacl{{\downarrow}x \cap C} \subset \thetacl{C}$ è $b :=
\inf{({\uparrow}x \cap C)} \in \thetacl{{\uparrow}x \cap C}
\subset \thetacl{C}$. By the choice of $x$, the double inequality
 $a < x < b$ is satisfied. Then $x$ does not belong to
 $\theta$-closed set ${\downarrow}a \cup {\uparrow}b$; this means that there is a neighborhood $U$ of the point $x$ such that $\overline{U} \cap
{\downarrow}a \cup {\uparrow}b = \emptyset$. Since $x \in
\overline{C}^{\theta^2}$, there is $y \in \overline{U} \cap
\thetacl{C}$. Since $\thetacl{C}$ is a chain and $y \not\in
{\downarrow}a \cup {\uparrow}b$, we again the double inequality $a
< y < b$. Let us now find a neighborhood $V$ of $y$ such that
$\overline{V} \cap {\downarrow}a \cup {\uparrow}b = \emptyset$ and
element $c \in \overline{V} \cap C$. Since $c \not\in
{\downarrow}a \cup {\uparrow}b$,  $a < c < b$. This inequality
leads us to a contradiction: since $c \in C$, if $c
> a = \sup{({\downarrow}x \cap C)}$ then $c > x$ (otherwise $c \leq a$); similarly, if  $c < b =
\inf{({\uparrow}x \cap C)}$ then $c < x$.
\end{proof}

A subset $M$ of a topological space $X$ is an {\it $H$-set} if
every cover of it by open sets of $X$ has a finite subfamily which
covers $M$ with the closures of its members.

The concept of an $H$-set, which generalizes the concept of a
compact subset of a space, was introduced by N.V. Velichko in
\cite{Vel}.

Recall that a topological space $X$ is called \textit{Urysohn
space}, if for any two distinct points $x, y \in X$ there are
neighborhoods $U_x$, $U_y$ of points $x,y$ such that
$\overline{U_x} \cap \overline{U_y} = \emptyset$.

The following results are well known:

$\bullet$ Every $\theta$-closed subset of an $H$-closed space is
an $H$-set \cite{Vel2}.

$\bullet$ If $X$ is $H$-closed and Urysohn, then $M\subset X$ is
$\theta$-closed if and only if it is an $H$-set \cite{DP}.

$\bullet$ $M$ is an $H$-set of a space $X$ if and only if for
every filter $\mathcal{F}$ on $X$, which meets $M$, $M\cap
ad_{\theta} \mathcal{F}\neq \emptyset$, where $ad_{\theta}
\mathcal{F}=\bigcap \{\overline{F}^{\theta}: F\in \mathcal{F}\}$
\cite{Ha}.

\begin{lemma}
Let $X$ be a $\theta$-complete topologized semilattice. Then every
$\theta$-closed chain $C$ is an $H$-set.
\end{lemma}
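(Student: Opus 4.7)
The plan is to verify the $H$-set property directly from its definition: given any family $\mathcal{U}$ of open subsets of $X$ covering $C$, I will extract a finite subfamily whose closures still cover $C$. Since $C$ is $\theta$-closed and $X$ is $\theta$-complete, $m := \inf C$ and $M := \sup C$ lie in $\overline{C}^\theta = C$. The key auxiliary set is
\[
D \;:=\; \{\,c\in C : C\cap {\downarrow}c \text{ is covered by the closures of finitely many members of } \mathcal{U}\,\},
\]
which plainly contains $m$ and is downward-closed in $C$. The goal is to show $M\in D$, equivalently that $E := C\setminus D$ is empty.

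Suppose for contradiction $E\neq\emptyset$. Since $D$ is downward-closed and $C$ is a chain, every element of $D$ lies strictly below every element of $E$. Because $D$ and $E$ are themselves chains, $\theta$-completeness produces $s := \sup D\in \overline{D}^\theta$ and $t := \inf E\in \overline{E}^\theta$; the $\theta$-closedness of $C$ forces both into $C$, and $s\leq t$. By Lemmas \ref{up_conv_lemma} and \ref{down_conv_lemma}, $D$ up-$\theta$-converges to $s$ and $E$ down-$\theta$-converges to $t$. Choose $U_s,U_t\in\mathcal{U}$ with $s\in U_s$ and $t\in U_t$; pick $d_0\in D$ with $D\cap {\uparrow}d_0\subset\overline{U_s}$ and $e_0\in E$ with $E\cap {\downarrow}e_0\subset\overline{U_t}$; and, using $d_0\in D$, fix a finite $\mathcal{V}_0\subset\mathcal{U}$ whose closures cover $C\cap{\downarrow}d_0$.

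It remains to show $\mathcal{V}_0\cup\{U_s,U_t\}$ closure-covers $C\cap{\downarrow}e_0$, which forces $e_0\in D$, contradicting $e_0\in E$. For $c\in C$ with $c\leq e_0$: if $c\leq d_0$ the closures of $\mathcal{V}_0$ suffice; if $c = s$ then $c\in U_s\subset\overline{U_s}$; if $d_0 < c < s$ then $c\notin E$ (since $E\geq t\geq s$), so $c\in D\cap{\uparrow}d_0\subset\overline{U_s}$; and if $c > s$ then $c\notin D$, hence $c\in E\cap{\downarrow}e_0\subset\overline{U_t}$. The main obstacle is this boundary bookkeeping at $s$ and $t$: the two convergence lemmas give control only near the endpoints, and it is the dichotomy $D\prec E$ along the chain — together with the $\theta$-closedness of $C$ that keeps $s$ and $t$ inside $C$ and thus in the scope of Lemmas \ref{up_conv_lemma} and \ref{down_conv_lemma} — that lets one stitch the two pieces of control together with the finite cover of $C\cap{\downarrow}d_0$ into a finite closure-cover of $C\cap{\downarrow}e_0$.
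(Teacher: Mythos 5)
Your proof is correct and follows essentially the same strategy as the paper's: both define the set of points of $C$ whose lower sections admit finite closure-covers, use $\theta$-completeness (with $\theta$-closedness of $C$) to place the relevant suprema/infima inside $C$, and invoke Lemmas \ref{up_conv_lemma} and \ref{down_conv_lemma} to patch the cover near the boundary. Your version merely streamlines the endgame by deriving the contradiction $e_0\in D$ directly, where the paper splits into the cases $b<e$ and $b=e$.
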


\begin{proof}
Let $C$ be a non-empty $\theta$-closed chain in $X$. Consider a
family of open sets $\mathcal{U}$, covering $C$. Let $A$ be a
set of points  $a \in C$ such that the set  ${\downarrow}x \cap C$
can be covered by a finite subfamily $\overline{\mathcal{U}}$.
$A\neq\emptyset$, since it obviously contains the smallest element
$c$ of the chain $C$. By $\theta$-completeness of $X$,  $\sup A
\in \thetacl{A} \subset \thetacl{C} = C$.

We show that $b := \sup A \in A$. Suppose that $b \not\in A$.
Choose $U_b \in \mathcal{U}$ such that $b \in U_b$. By Lemma \ref{up_conv_lemma},
there exists a point $a \in A$ such that $A \cap {\uparrow}a
\subset \overline{U_b}$. By definition of $A$,  $x \in A$ implies
${\downarrow}x \cap C \subset A$. By assumption,  $a < \sup{A} =
b$, so for each $a \leq x \leq b$ there is  $y \in A$ such that $x
\leq y \leq b$, i.e.,  $A \cap {\uparrow}a = (C \cap {\uparrow}a
\cap {\downarrow}b) \setminus \lbrace b \rbrace$. By definition of
$A$, there is a finite subfamily $\mathcal{V} \subset \mathcal{U}$
such that $\bigcup \overline{\mathcal{V}} \supset {\downarrow}a
\cap C$. Let $\mathcal{U'} =  \mathcal{V} \cup \lbrace U_b
\rbrace$. Then $\overline{\mathcal{U'}}$ is the finite cover of
${\downarrow}b \cap C$, and $b \in A$.

Now we claim that $C = {\downarrow}b \cap C$. Suppose that $E := C
\setminus {\downarrow}b \neq \emptyset$. Then $e := \inf{E} \in
\thetacl{E} \subset \thetacl{C} = C$. Note that $a \in
{\downarrow}b \cap C = A$ for every $a \in C$ such that $a < e$.

Choose $U_e \in \mathcal{U}$ such that $e \in U_e$. Obviously that
 $\mathcal{U'} \cup \lbrace \overline{U_e} \rbrace$ is the finite cover of ${\downarrow}e \cap C$, and  $e \in A$. Note that
$b$ is a lower bounded of $C \setminus {\downarrow}b = E$, and $b
\leq e$. Consider two cases:

1) $b < e$. Since $e \in A$, this contradicts the equality  $b =
\sup A$.

2) $b = e$. By Lemma \ref{down_conv_lemma}, there exists $d \in E$ such that $E \cap
{\downarrow}d \subset \overline{U_b}$. Note that $d
> e = b$ ($e \not\in E$). Then ${\downarrow}d \cap C \subset
\bigcup \mathcal{\overline{U'}}$ and $d \in A$, which contradicts
the equality $b = \sup A$.
\end{proof}

Combining this Lemma with Theorem \ref{chain_thetacl}, we obtain

\begin{corollary}\label{H_set_cor}
Let $X$ be a $\theta$-complete
$\theta$-${\uparrow}{\downarrow}$-closed topologized semilattice.
Then for each chain $C \subset X$ the chain $\thetacl{C}$ is an
$H$-set.
\end{corollary}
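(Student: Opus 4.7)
The statement is a very short consequence of the two results immediately preceding it, so my plan is simply to glue them together carefully. For a chain $C \subset X$, I would first invoke Theorem \ref{chain_thetacl} to conclude that $\thetacl{C}$ is $\theta$-closed in $X$. Then, to apply the preceding Lemma (every $\theta$-closed chain in a $\theta$-complete topologized semilattice is an $H$-set), I only need to know that $\thetacl{C}$ is itself a chain; this is exactly the content of Lemma \ref{chain_lemma}. Combining these two facts, $\thetacl{C}$ is a $\theta$-closed chain in a $\theta$-complete semilattice, so the Lemma gives that it is an $H$-set.

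The single point that needs checking is that Lemma \ref{chain_lemma} is applicable: its hypothesis is that $X$ be $\theta$-$\updownarrow$-closed, whereas the corollary only assumes that $X$ is $\theta$-${\uparrow}{\downarrow}$-closed. I would resolve this with the standard observation that the finite union of $\theta$-closed sets is $\theta$-closed: if $x \notin A \cup B$ and both $A$, $B$ are $\theta$-closed, pick neighborhoods $U_1$, $U_2$ of $x$ with $\overline{U_i}$ disjoint from the respective sets, so $U_1 \cap U_2$ is a neighborhood of $x$ whose closure misses $A \cup B$. Applied to $A = {\uparrow}x$ and $B = {\downarrow}x$, this shows ${\updownarrow}x$ is $\theta$-closed, so $\theta$-${\uparrow}{\downarrow}$-closed implies $\theta$-$\updownarrow$-closed, and Lemma \ref{chain_lemma} applies.

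There is no real obstacle here; the entire content of the corollary is the bookkeeping observation that the two preceding results combine to give exactly this statement, with the minor hypothesis-matching remark above. A one-sentence proof suffices: by Theorem \ref{chain_thetacl}, $\thetacl{C}$ is $\theta$-closed, and by Lemma \ref{chain_lemma} it is a chain, so the preceding Lemma (applied to the $\theta$-closed chain $\thetacl{C}$) gives that $\thetacl{C}$ is an $H$-set.
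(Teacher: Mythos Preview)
Your proposal is correct and matches the paper's own approach exactly: the paper states the corollary as an immediate combination of Theorem~\ref{chain_thetacl} (giving that $\thetacl{C}$ is $\theta$-closed) with the preceding Lemma (every $\theta$-closed chain in a $\theta$-complete semilattice is an $H$-set). Your extra remark that $\theta$-${\uparrow}{\downarrow}$-closed implies $\theta$-$\updownarrow$-closed via closure of finite unions is a nice bit of hypothesis hygiene that the paper leaves implicit.
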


\begin{lemma}\label{H_set_lemma2}
Let $X$ be a $\theta$-${\uparrow}{\downarrow}$-closed semilattice
in which $\thetacl{C}$ is an $H$-set for any chain $C \subset X$.
Then $X$ is a $\theta$-complete semilattice.
\end{lemma}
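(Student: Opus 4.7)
The plan is to show that every non-empty chain $C \subset X$ satisfies $\sup C \in \thetacl{C}$; the corresponding statement for $\inf C$ is completely symmetric, obtained by replacing ${\uparrow}c$ with ${\downarrow}c$ throughout. A quick neighborhood argument (if $x \notin A \cup B$ with $A, B$ both $\theta$-closed, combine witnesses $\overline{U} \cap A = \emptyset$ and $\overline{V} \cap B = \emptyset$ via $U \cap V$) shows that finite unions of $\theta$-closed sets are $\theta$-closed, so the hypothesis $\theta$-$\uparrow\downarrow$-closed upgrades to $\theta$-$\updownarrow$-closed, and Lemma \ref{chain_lemma} then gives that $\thetacl{C}$ is itself a chain.

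Next, I would build the filter base $\mathcal{B} = \lbrace C \cap {\uparrow}c : c \in C \rbrace$. Each of these sets is non-empty (it contains $c$), and they are downward directed by inclusion because $C$ is a chain. Let $\mathcal{F}$ be the filter generated by $\mathcal{B}$. Every member of $\mathcal{F}$ contains some $C \cap {\uparrow}c \subset C \subset \thetacl{C}$, so $\mathcal{F}$ meets $\thetacl{C}$. Because $\thetacl{C}$ is an $H$-set by assumption, the filter characterization recalled just before the lemma provides a point
\[
x \in \thetacl{C} \cap ad_{\theta} \mathcal{F}.
\]

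It remains to identify $x$ with $\sup C$. Since $X$ is $\theta$-$\uparrow\downarrow$-closed, ${\uparrow}c$ is $\theta$-closed for every $c \in C$, and $x \in \thetacl{C \cap {\uparrow}c} \subset \thetacl{{\uparrow}c} = {\uparrow}c$, so $x$ is an upper bound of $C$. Conversely, if $y$ is any upper bound of $C$, then $C \subset {\downarrow}y$, hence $\thetacl{C} \subset \thetacl{{\downarrow}y} = {\downarrow}y$, forcing $x \leq y$. Thus $\sup C$ exists, equals $x$, and lies in $\thetacl{C}$ by construction.

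The only real obstacle is choosing the right tool: the filter characterization of $H$-sets, $M \cap ad_{\theta} \mathcal{F} \neq \emptyset$ for every filter meeting $M$, is precisely what converts the $H$-set hypothesis on $\thetacl{C}$ into a usable $\theta$-adherent point. Once that point is in hand, the $\theta$-closedness of the principal upper and lower sets reduces the verification that it is the supremum (or, in the symmetric argument, the infimum) to a routine inclusion chase.
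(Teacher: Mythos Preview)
Your proof is correct and follows essentially the same strategy as the paper: apply the filter characterization of $H$-sets to the filter generated by the tails $C\cap{\uparrow}c$ (the paper uses $\thetacl{C}\cap{\downarrow}x$ for $\inf$), then use $\theta$-closedness of ${\uparrow}c$ and ${\downarrow}y$ to identify the resulting $\theta$-adherent point as the supremum. Your verification that the point equals $\sup C$ is in fact tidier than the paper's, and you are more careful in justifying the appeal to Lemma~\ref{chain_lemma} (noting that $\theta$-${\uparrow}{\downarrow}$-closed implies $\theta$-${\updownarrow}$-closed), though as written your argument never actually uses that $\thetacl{C}$ is a chain.
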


\begin{proof}
Let $C \subset X$ be a chain. By Lemma \ref{chain_lemma}, $\thetacl{C}$ is also
chain. Consider a centered family $\mathcal{F}_< = \lbrace
\thetacl{C} \cap {\downarrow}x : x \in C \rbrace$ subsets
$\thetacl{C}$. Since the chain $\thetacl{C}$ is an $H$-set and
$\thetacl{\thetacl{C} \cap {\downarrow}x} \subset
\thetacl{{\downarrow}x} =  {\downarrow}x$, we have
$\bigcap \mathcal{F}_< = \bigcap\limits_{x \in C} \thetacl{C} \cap
{\downarrow}x \supset \bigcap\limits_{x \in C}
\thetacl{\thetacl{C} \cap {\downarrow}x} \cap \thetacl{C} \neq
\emptyset$ by the criterion of the H-set mentioned above and proved in \cite{Ha}. Obviously, $\bigcap \mathcal{F}_<$ contains the only
element $c$ that is the smallest in $\thetacl{C}$. Clearly, $c
\leq x$ for each $x \in C$; suppose that there is  $a < c$, and
$a=inf C$. Then  $c \not\in {\uparrow}a \supset \thetacl{C}$,
which contradicts the choice  $c$. Thus, $c = \inf{C} \in
\thetacl{C}$; similarly, it can be shown that the intersection of
a centered family $\mathcal{F}_> = \lbrace \thetacl{C} \cap
{\uparrow}x : x \in C \rbrace$ contains $\sup C$.
\end{proof}

By Corollary \ref{H_set_cor} and Lemma \ref{H_set_lemma2} we have the following
theorem.

\begin{theorem}
A $\theta$-${\uparrow}{\downarrow}$-closed topologized semilattice
$X$ is $\theta$-complete if and only if for any chain $C \subset
X$ the chain $\thetacl{C}$ is an $H$-set.
\end{theorem}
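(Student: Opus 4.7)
The plan is to recognize that this theorem is precisely the biconditional packaging of two results already established in the excerpt, so the proof will be a short synthesis rather than a new argument. For the necessity direction, I would simply invoke Corollary~\ref{H_set_cor}: assuming $X$ is $\theta$-${\uparrow}{\downarrow}$-closed and $\theta$-complete, that corollary directly yields that $\thetacl{C}$ is an $H$-set for every chain $C \subset X$. For the sufficiency direction, I would invoke Lemma~\ref{H_set_lemma2}: assuming $X$ is $\theta$-${\uparrow}{\downarrow}$-closed and that $\thetacl{C}$ is an $H$-set for every chain $C$, that lemma directly yields $\theta$-completeness of $X$.

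Since the $\theta$-${\uparrow}{\downarrow}$-closedness assumption is shared by both auxiliary statements, no additional hypothesis-tracking is required, and the two implications glue together verbatim. The proof will therefore be a two-line argument organized as ``$(\Rightarrow)$ by Corollary~\ref{H_set_cor}'' and ``$(\Leftarrow)$ by Lemma~\ref{H_set_lemma2}''.

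There is no genuine obstacle at this stage: the substantive work was absorbed into the preceding development. The forward direction required first proving that $\thetacl{C}$ is itself $\theta$-closed (Theorem~\ref{chain_thetacl}), which in turn relied on the $\theta$-convergence lemmas for up- and down-directed sets, and then applying the lemma that every $\theta$-closed chain in a $\theta$-complete semilattice is an $H$-set. The reverse direction rested on extracting $\inf C$ and $\sup C$ from the centered families $\{\thetacl{C}\cap{\downarrow}x : x\in C\}$ and $\{\thetacl{C}\cap{\uparrow}x : x\in C\}$ via Hamlett's characterization of $H$-sets in terms of $\theta$-adherence of filters. With those pieces in place, the final theorem is merely the formal combination.
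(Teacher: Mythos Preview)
Your proposal is correct and matches the paper's own proof exactly: the paper states the theorem immediately after Corollary~\ref{H_set_cor} and Lemma~\ref{H_set_lemma2} with the single line ``By Corollary~\ref{H_set_cor} and Lemma~\ref{H_set_lemma2} we have the following theorem.'' Your additional commentary on how those auxiliary results were obtained is accurate but not needed for the proof itself.
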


We show that a slightly more general result can be obtained from
other arguments. We give the necessary definitions.

\begin{definition}
Let $X$ be a topologized semilattice. The {\it weak
$\theta$-$chain^\bullet$-topology} on $X$ is called a topology
generated by a subbase consisting of complements to
$\theta$-closures of chains in $X$.
\end{definition}

\begin{definition}
A topologized semilattice $X$ is called {\it weak
$\theta$-chain$^\bullet$ compact} if $X$ is compact in its weak
$\theta$-$chain^\bullet$ topology.
\end{definition}

\begin{lemma}
Let $(X, \tau)$ be a $\theta$-complete
$\theta$-${\uparrow}{\downarrow}$-closed topologized semilattice.
Then $X$ is weak $\theta$-$chain^\bullet$ compact.
\end{lemma}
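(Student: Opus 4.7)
The plan is to invoke the Alexander subbase theorem. Since the weak $\theta$-$chain^\bullet$-topology on $X$ is generated by the subbase consisting of complements of $\theta$-closures of chains, compactness is equivalent to the assertion that every centered family $\{\thetacl{C_\alpha}\}_{\alpha \in A}$ of such $\theta$-closures has nonempty intersection. This is the single statement I would verify.

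Three results proved earlier feed directly into the argument: by Lemma \ref{chain_lemma} each $\thetacl{C_\alpha}$ is itself a chain; by Theorem \ref{chain_thetacl} each $\thetacl{C_\alpha}$ is $\theta$-closed; and by Corollary \ref{H_set_cor} each $\thetacl{C_\alpha}$ is an $H$-set of $X$. Combined with the filter characterization of $H$-sets attributed to Hamburger in \cite{Ha}, these supply all the ingredients needed.

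The core step would then be the following. Fix any $\alpha_0 \in A$ and set $M := \thetacl{C_{\alpha_0}}$. The trace family $\{\thetacl{C_\alpha} \cap M\}_{\alpha \in A}$ inherits the finite intersection property from centredness of the original family and thus generates a filter $\mathcal{F}$ on $X$, each of whose basic elements is a nonempty subset of $M$; in particular $\mathcal{F}$ meets $M$. The $H$-set criterion then yields $M \cap ad_\theta \mathcal{F} \neq \emptyset$. Since each $\thetacl{C_\alpha}$ is $\theta$-closed, monotonicity of the $\theta$-closure operator gives
\[
ad_\theta \mathcal{F} \;\subset\; \thetacl{\thetacl{C_\alpha} \cap M} \;\subset\; \thetacl{\thetacl{C_\alpha}} \;=\; \thetacl{C_\alpha}
\]
for every $\alpha \in A$, whence $\bigcap_{\alpha \in A} \thetacl{C_\alpha} \supset M \cap ad_\theta \mathcal{F} \neq \emptyset$.

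The main subtlety I anticipate is the idempotence step hidden inside the displayed inclusion: the $H$-set criterion is phrased via $\theta$-closures of the filter members, so the argument genuinely requires Theorem \ref{chain_thetacl} in order to collapse $\thetacl{\thetacl{C_\alpha} \cap M}$ back to $\thetacl{C_\alpha}$; without this, the chain of inclusions would not close up and one could not conclude that $ad_\theta \mathcal{F}$ lies in $\bigcap_\alpha \thetacl{C_\alpha}$. Everything else is essentially bookkeeping around the Alexander subbase theorem.
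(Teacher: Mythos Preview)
Your argument is correct, but it takes a genuinely different route from the paper's. You work directly with the $H$-set machinery already built up: Corollary~\ref{H_set_cor} gives that each $\thetacl{C_\alpha}$ is an $H$-set, and you then combine the Alexander subbase theorem with the filter criterion for $H$-sets from \cite{Ha} to show that any centered family of subbasic closed sets has nonempty intersection. The idempotence step $\thetacl{\thetacl{C_\alpha}}=\thetacl{C_\alpha}$ via Theorem~\ref{chain_thetacl} is exactly the point that makes the chain of inclusions close up, as you note. (One cosmetic slip: the author of \cite{Ha} is Hamlett, not Hamburger.)

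The paper, by contrast, avoids the $H$-set criterion here entirely. It passes to the auxiliary topology $\tau_\theta$ whose closed sets are precisely the $\theta$-closed sets of $(X,\tau)$, observes that $\theta$-completeness and $\theta$-${\uparrow}{\downarrow}$-closedness of $(X,\tau)$ translate into ordinary completeness and ${\uparrow}{\downarrow}$-closedness of $(X,\tau_\theta)$, and then invokes the external Lemma~5.4 of \cite{BaBa4} to conclude that $(X,\tau_\theta)$ is compact in its weak$^\bullet$ topology $\mathcal{W}^\bullet$. Since (using Lemma~\ref{chain_lemma} and Theorem~\ref{chain_thetacl}) every $\thetacl{C}$ is a $\theta$-closed chain, hence a closed subsemilattice of $(X,\tau_\theta)$, the weak $\theta$-chain$^\bullet$ topology sits inside this $\mathcal{W}^\bullet$ topology, and compactness descends. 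Your approach is more self-contained within the paper's own development (it reuses Corollary~\ref{H_set_cor} rather than appealing to \cite{BaBa4}); the paper's approach is shorter but leans on an external result and the $\tau_\theta$ transfer principle.
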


\begin{proof}
In \cite{Vel}, it was proved that the intersection and finite
union of $\theta$-closed sets is $\theta$-closed. Obviously, the
empty set and the entire space are $\theta$-closed sets. It
follows that for any topological space $(X, \tau)$ there exists a
topology $\tau_\theta$ such that closed (in $(X, \tau_\theta)$)
sets are exactly $\theta$-closed sets of the space $(X, \tau)$.

 Clearly, the $\theta$-closure of a set $A \subset
X$ in $(X, \tau)$ is contained in the closure of $A$ in $(X,
\tau_\theta)$. It follows that a semilattice $(X, \tau_\theta)$ is
complete and ${\uparrow}{\downarrow}$-closed. By Lemmas \ref{chain_lemma} and \ref{chain_thetacl},
for each chain $C$  the chain $\thetacl{C}$ is $\theta$-closed. It
follows that the weak $\theta$-chain$^\bullet$-topology on $X$ is
contained in the weak$^\bullet$ $\mathcal{W}^\bullet$-topology of
the space $(X, \tau)$. By Lemma 5.4 of \cite{BaBa4}, the semilattice $(X,
\tau_\theta)$ is compact in its weak$^\bullet$
$\mathcal{W}^\bullet$-topology. Moreover, $(X, \tau)$ is weak
$\theta$-chain$^\bullet$ compact.
\end{proof}

\begin{lemma}
Let $X$ be a weak $\theta$-chain$^\bullet$ compact,
$\theta$-${\updownarrow}$-closed topologized semilattice. Then
$\thetacl{C}$ is an $H$-set for any chain $C \subset X$.
\end{lemma}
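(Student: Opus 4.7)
The plan is to prove the lemma by a direct Alexander-subbase argument, with one small twist: besides the obvious centered family built from the open cover, I will add $\thetacl{C}$ itself to the family, which is legitimately a subbase-closed set of the weak $\theta$-$chain^\bullet$-topology (since $\thetacl{C}$ is the $\theta$-closure of the chain $C$).

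First I would fix a chain $C\subset X$ and an arbitrary cover $\mathcal{U}$ of $\thetacl{C}$ by sets open in $X$, and argue by contradiction: assume no finite $\{U_1,\dots,U_n\}\subset\mathcal{U}$ satisfies $\thetacl{C}\subset\bigcup_{i=1}^n\overline{U_i}$. Since $X$ is $\theta$-$\updownarrow$-closed, Lemma \ref{chain_lemma} applies and $\thetacl{C}$ is itself a chain; in particular $\thetacl{C}\setminus\overline{U}$ is a chain for every $U\in\mathcal{U}$, so $\thetacl{\thetacl{C}\setminus\overline{U}}$ is the $\theta$-closure of a chain. I will then consider the family
\[
\mathcal{K}\;=\;\{\thetacl{C}\}\;\cup\;\{\thetacl{\thetacl{C}\setminus\overline{U}}:U\in\mathcal{U}\},
\]
every member of which is closed in the weak $\theta$-$chain^\bullet$-topology on $X$.

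To see that $\mathcal{K}$ is centered, I would pick any finite subfamily determined by $U_1,\dots,U_n\in\mathcal{U}$; by the contradiction hypothesis there exists $z\in\thetacl{C}\setminus(\overline{U_1}\cup\cdots\cup\overline{U_n})$, and this $z$ lies in $\thetacl{C}$ and in each $\thetacl{C}\setminus\overline{U_i}\subset\thetacl{\thetacl{C}\setminus\overline{U_i}}$. Invoking Alexander's subbase theorem together with the hypothesis that $X$ is weak $\theta$-$chain^\bullet$ compact then produces a point $y\in\bigcap\mathcal{K}$.

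The closing step extracts the contradiction. Because $y\in\thetacl{C}$, some $U_\beta\in\mathcal{U}$ contains $y$ and is therefore an open neighborhood of $y$. But $y\in\thetacl{\thetacl{C}\setminus\overline{U_\beta}}$ forces $\overline{U_\beta}\cap(\thetacl{C}\setminus\overline{U_\beta})\neq\emptyset$, which is absurd. Hence some finite subfamily of $\mathcal{U}$ covers $\thetacl{C}$ by closures, i.e.\ $\thetacl{C}$ is an $H$-set. The main subtlety I expect is precisely the choice of family: omitting the single extra set $\thetacl{C}$ would only yield $y\in\thetacl{\thetacl{C}}$, and since $\theta$-closure is not idempotent one could not invoke the cover $\mathcal{U}$ at $y$; inserting $\thetacl{C}$ itself into $\mathcal{K}$ is the key move that makes the final contradiction available.
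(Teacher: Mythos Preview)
Your proof is correct and follows essentially the same line as the paper's: both reduce the $H$-set property of $\thetacl{C}$ to an Alexander-type argument on the centered family of $\theta$-closures of subchains of $\thetacl{C}$, together with $\thetacl{C}$ itself, all of which are subbase-closed in the weak $\theta$-$chain^\bullet$-topology. The only cosmetic difference is that the paper verifies Hamlett's filter criterion for $H$-sets (showing $\bigcap_\alpha\thetacl{F_\alpha}\cap\thetacl{C}\neq\emptyset$ for any centered family $\{F_\alpha\}$ in $\thetacl{C}$), whereas you argue directly from the open-cover definition; the underlying compactness argument and the key inclusion of $\thetacl{C}$ in the family are identical.
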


\begin{proof}
Let $C \subset X$ be a chain. We show that for any centered family
$\lbrace F_\alpha \rbrace_{\alpha \in \Lambda}$ of subsets
$\thetacl{C}$, $\bigcap\limits_{\alpha \in \Lambda}
\thetacl{F_\alpha} \cap \thetacl{C} \neq \emptyset$. By Lemma \ref{chain_lemma},
the set $\thetacl{C}$ is a chain; then so is
$F_\alpha$. This means that the sets $\thetacl{F_\alpha}$ closed
in weak $\theta$-$chain^\bullet$ topology on $X$.
\end{proof}

\begin{theorem}
For a $\theta$-${\uparrow}{\downarrow}$-closed semilattice $X$ the
following statements are equivalent:

1) $X$ is  $\theta$-complete;

2) $X$ is $\theta$-$chain^\bullet$ compact;

3) $\thetacl{C}$ is an $H$-set for any chain $C \subset X$.
\end{theorem}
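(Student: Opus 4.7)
The plan is to establish the cyclic chain of implications $(1) \Rightarrow (2) \Rightarrow (3) \Rightarrow (1)$, piecing together the three preparatory results that immediately precede the theorem. None of the three implications requires any new argument; the content of the theorem is really the packaging of these pieces into a single tidy equivalence, together with the observation that the hypothesis matches across all three parts.

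For $(1) \Rightarrow (2)$ I would simply invoke the first of the two lemmas just above the theorem, which says exactly that a $\theta$-complete $\theta$-${\uparrow}{\downarrow}$-closed topologized semilattice is weak $\theta$-$chain^\bullet$ compact. For $(2) \Rightarrow (3)$ I would appeal to the second preceding lemma, after first noting a minor hypothesis conversion: the lemma is stated for $\theta$-${\updownarrow}$-closed semilattices, while the theorem assumes $\theta$-${\uparrow}{\downarrow}$-closedness. These are compatible because ${\updownarrow}x = {\uparrow}x \cup {\downarrow}x$ is a finite union of $\theta$-closed sets, and finite unions of $\theta$-closed sets are $\theta$-closed by Velichko's result cited earlier in the paper. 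Finally, for $(3) \Rightarrow (1)$ I would cite Lemma~\ref{H_set_lemma2} verbatim, since its statement is precisely the required implication under the same $\theta$-${\uparrow}{\downarrow}$-closedness hypothesis.

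Because this is essentially an assembly, there is no genuine obstacle; the only point that requires any care is the matching of hypotheses between $\theta$-${\updownarrow}$-closedness and $\theta$-${\uparrow}{\downarrow}$-closedness in the step $(2) \Rightarrow (3)$, and flagging this implication between the two closedness properties is the single substantive remark of the write-up.
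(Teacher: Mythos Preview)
Your proposal is correct and follows exactly the approach the paper intends: the theorem is stated without an explicit proof precisely because it is meant to be assembled from the two lemmas immediately preceding it (giving $(1)\Rightarrow(2)$ and $(2)\Rightarrow(3)$) together with Lemma~\ref{H_set_lemma2} (giving $(3)\Rightarrow(1)$). Your remark that $\theta$-${\uparrow}{\downarrow}$-closedness implies $\theta$-${\updownarrow}$-closedness via finite unions of $\theta$-closed sets is a useful clarification that the paper leaves implicit.
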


\begin{lemma}
Let $X$ be a Urysohn space, $r : X \rightarrow X$ be a retraction.
Then the set $r(X)$ is $\theta$-closed in $X$.
\end{lemma}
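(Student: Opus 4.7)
The plan is to show directly that any point $x \in X \setminus r(X)$ fails to be $\theta$-adherent to $r(X)$. The key algebraic ingredient is that a retraction fixes its image pointwise: $r(y)=y$ for every $y \in r(X)$. In particular, for the chosen $x$ one has $r(x) \neq x$, since $r(x)=x$ would force $x = r(x) \in r(X)$.

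Next, I would apply the Urysohn property to the distinct points $x$ and $r(x)$ to obtain neighborhoods $U_{0}$ of $x$ and $V$ of $r(x)$ with $\overline{U_{0}} \cap \overline{V} = \emptyset$. Using continuity of $r$ at $x$, I would pick an open neighborhood $W$ of $x$ with $r(W) \subset V$, and define $U = U_{0} \cap W$. Thus $U$ is a neighborhood of $x$ such that $r(U) \subset V$ and $\overline{U} \subset \overline{U_{0}}$ is disjoint from $\overline{V}$.

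The decisive step is to strengthen the containment $r(U) \subset V$ to $r(\overline{U}) \subset \overline{V}$, which follows at once from the standard fact that a continuous map $f$ satisfies $f(\overline{A}) \subset \overline{f(A)}$. With this in hand, if some $y \in \overline{U} \cap r(X)$ existed, then on the one hand $y = r(y) \in r(\overline{U}) \subset \overline{V}$, and on the other hand $y \in \overline{U} \subset \overline{U_{0}}$, contradicting $\overline{U_{0}} \cap \overline{V} = \emptyset$. Hence $\overline{U} \cap r(X) = \emptyset$, i.e. $x \notin \thetacl{r(X)}$, which gives $\thetacl{r(X)} = r(X)$.

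There is no serious obstacle here: the argument is just a combination of the Urysohn separation, the continuity of $r$, and the fact that a retraction fixes every point of its image. The only subtle point worth flagging is the passage from $r(U) \subset V$ to $r(\overline{U}) \subset \overline{V}$ via $r(\overline{U}) \subset \overline{r(U)}$; this must not be confused with the (generally false) statement that $r(\overline{U}) \subset V$.
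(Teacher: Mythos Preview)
Your proof is correct and follows essentially the same approach as the paper's: both use the Urysohn separation of $x$ and $r(x)$, shrink the neighborhood of $x$ via continuity so that its $r$-image lands in the chosen neighborhood of $r(x)$, and then derive a contradiction from the existence of a fixed point $y=r(y)$ in $\overline{U}\cap r(X)$ by invoking $r(\overline{U})\subset\overline{r(U)}$. The only cosmetic difference is that the paper phrases it as a proof by contradiction starting from a hypothetical $x\in\thetacl{r(X)}\setminus r(X)$, whereas you argue directly that every $x\notin r(X)$ has a neighborhood whose closure misses $r(X)$.
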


\begin{proof}
Suppose that $\thetacl{r(X)} \setminus r(X) \neq \emptyset$ and
let $x \in \thetacl{r(X)} \setminus r(X)$. Note that $r(x) \neq
x$. Find neighborhoods  $U_x, U_{r(x)}$ of $x$ and $r(x)$,
respectively, such that $\overline{U_x} \cap \overline{U_{r(x)}} =
\emptyset$. By the continuity of $r$, there is a neighborhood $V_x
\subset U_x$ of $x$ such that $r(V_x) \subset U_{r(x)}$. Clearly,
then $\overline{V_x} \cap \overline{U_{r(x)}} = \emptyset$. Since
$x \in \thetacl{r(X)}$, $\overline{V_x} \cap r(X) \neq \emptyset$.
Let $z \in \overline{V_x} \cap r(X)$. Since $r(z) = z$ and $r$ is
continuous,  $z \in r(\overline{V_x}) \subset
\overline{r(V_x)}^{r(X)} \subset \overline{U_{r(x)}}^{r(X)}
\subset \overline{U_{r(x)}}$, that is $z \in \overline{V_x} \cap
\overline{U_r(x)}$, contradiction.
\end{proof}

\begin{proposition}
An Urysohn semitopological semilattice $X$ is
$\theta$-${\uparrow}{\downarrow}$-closed.
\end{proposition}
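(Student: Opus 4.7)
The plan is to verify the two $\theta$-closedness conditions (for ${\uparrow}x$ and ${\downarrow}x$) separately for each fixed $x \in X$, exploiting the asymmetry between meets and joins in a semilattice.

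For ${\downarrow}x$, I would directly invoke the preceding retraction lemma. Consider $r : X \to X$ given by $r(y) = xy$; it is continuous because $X$ is semitopological. Using idempotence ($xx = x$) together with commutativity, one computes $r(r(y)) = x(xy) = (xx)y = xy = r(y)$, so $r$ is a retraction. Its image is precisely ${\downarrow}x$: every $z \in {\downarrow}x$ satisfies $xz = z$ and hence $z = r(z) \in r(X)$, while conversely any $r(y) = xy$ lies in ${\downarrow}x$ since $x(xy) = xy$. By the previous lemma, applied to the Urysohn space $X$, the set $r(X) = {\downarrow}x$ is $\theta$-closed.

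For ${\uparrow}x$, I would use a direct preimage argument, since there is no analogous continuous retraction onto ${\uparrow}x$ (this is the main obstacle: in a semilattice we have meets but not necessarily joins, so the symmetric construction is unavailable). Write ${\uparrow}x = \{y \in X : xy = x\} = L_x^{-1}(\{x\})$, where $L_x(y) = xy$ is continuous. Two elementary facts then finish the job. First, in any Urysohn space singletons are $\theta$-closed: given $y \neq x$, Urysohnness provides neighborhoods $U \ni x$ and $V \ni y$ with $\overline{U} \cap \overline{V} = \emptyset$, so $x \notin \overline{V}$, whence $y \notin \thetacl{\{x\}}$. Second, the preimage of a $\theta$-closed set under a continuous map $f : X \to Y$ is $\theta$-closed: if $z \notin f^{-1}(F)$, choose a neighborhood $W$ of $f(z)$ with $\overline{W} \cap F = \emptyset$; then $f^{-1}(W)$ is a neighborhood of $z$ whose closure is contained in the closed set $f^{-1}(\overline{W})$, which is disjoint from $f^{-1}(F)$.

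Combining these two facts with the continuity of $L_x$ yields that ${\uparrow}x$ is $\theta$-closed, completing the proof. I expect no further obstacles: once the asymmetry is handled by treating ${\uparrow}x$ via preimages rather than retractions, the argument reduces to routine verifications.
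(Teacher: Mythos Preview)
Your proposal is correct and follows essentially the same route as the paper: the map $y \mapsto xy$ is used both as a retraction onto ${\downarrow}x$ (invoking the preceding lemma) and as a continuous map whose fibre over $x$ is ${\uparrow}x$ (invoking $\theta$-closedness of singletons and preservation of $\theta$-closedness under preimages). The only cosmetic difference is that the paper observes singletons are $\theta$-closed already under Hausdorffness, whereas you cite the stronger Urysohn hypothesis; either suffices here.
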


\begin{proof}
Consider an element $x \in X$ and the mapping $s_x : X \rightarrow
X, s_x : y \mapsto xy$. Since $X$ is Hausdorff, $\lbrace x
\rbrace$ is $\theta$-closed. Is easy to check, that the preimage of a
$\theta$-closed set under continuous mapping is $\theta$-closed.
It is easy to see that ${\uparrow}x = s_x^{-1}(x)$ and $s_x$ is a
retraction $X$ on ${\downarrow}x$; $\theta$-closedness
${\uparrow}x$ and ${\downarrow}x$ follows from the continuity
of $s_x$.
\end{proof}

\section{The $\Theta$-completeness of topologized semilattices}

Unlike the $\delta$-closure, the operation of
taking the $\theta$-closure of a set is not necessarily idempotent, that is, the $\theta$-closure
of set may not be $\theta$-closed. This fact motivates the
following definition.

\begin{definition}
A $\Theta$-closure of a subset $A$ of a topological space $X$ is
called the set $\sthetacl{A}$, equal to the intersection of all
$\theta$-closed subsets of $X$, containing $A$.
\end{definition}

In \cite{Vel}, it was proved that the intersection and finite union of
$\theta$-closed sets is $\theta$-closed. Obviously, the empty set
and the entire space are $\theta$-closed sets. It follows that for
any topological space $(X, \tau)$ there exists a topology
$\tau_\theta$, such that the closed sets in space $(X,
\tau_\theta)$ exactly the $\theta$-closed sets in  $(X, \tau)$.
Obviously, the closure operator in $(X, \tau_\theta)$ is the same
as the $\Theta$-closure operator in $(X, \tau)$.

Complement to a $\theta$-closed set is called a $\theta$-open set.

\begin{definition}
A topologized semilattice $X$ is called \textit{$\Theta$-complete}
if for every non-empty chain $C \subset X$ $\inf{C} \in
\sthetacl{C}$ and $\sup{C} \in \sthetacl{C}$.
\end{definition}

\begin{definition}
Let $X$ be a topologized semilattice. A \textit{weak
$\Theta$-$chain^\bullet$ topology} on $X$ is called a topology
generated by subbase consisting of the complements of
$\theta$-closed chains in $X$.
\end{definition}

\begin{definition}
The topologized semilattice $X$ is called {\it weak
$\Theta$-$chain^\bullet$ compact} if $X$ is compact in its weak
$\Theta$-$chain^\bullet$ topology.
\end{definition}

\begin{theorem}
For a $\theta$-${\uparrow}{\downarrow}$-closed topologized
semilattice $X$ the following conditions are equivalent:

 1) $X$ is $\Theta$-complete;

2) $X$ is  weak $\Theta$-$chain^\bullet$ compact;

3) any $\theta$-open cover $\lbrace U_\alpha \rbrace_{\alpha \in
\Lambda}$ of a $\theta$-closed (in $X$) chain $C$ contains a
finite subcover.
\end{theorem}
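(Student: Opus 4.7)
The plan is to reduce the entire theorem to Theorem \ref{th23} by passing from $(X,\tau)$ to the auxiliary topology $\tau_\theta$ whose closed sets are exactly the $\theta$-closed sets of $(X,\tau)$. This topology is well-defined because, as recalled in the paper just before the definition of $\Theta$-closure, $\theta$-closed sets are stable under arbitrary intersections and finite unions, and include $\emptyset$ and $X$. By construction, the closure operator of $(X,\tau_\theta)$ coincides with the $\Theta$-closure operator of $(X,\tau)$.

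First I would translate each of the three conditions into the corresponding condition for the semilattice $(X,\tau_\theta)$. Since $\sthetacl{A}$ equals the $\tau_\theta$-closure of $A$, $\Theta$-completeness of $(X,\tau)$ is literally the completeness of $(X,\tau_\theta)$. Next, the subbase defining the weak $\Theta$-$chain^\bullet$-topology consists of complements of $\theta$-closed chains of $(X,\tau)$, i.e.\ complements of $\tau_\theta$-closed chains of $(X,\tau_\theta)$; hence the weak $\Theta$-$chain^\bullet$-topology on $(X,\tau)$ coincides with the weak $chain^\bullet$-topology on $(X,\tau_\theta)$, and condition (2) is the weak $chain^\bullet$-compactness of $(X,\tau_\theta)$. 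Finally, the $\theta$-open sets of $(X,\tau)$ are exactly the $\tau_\theta$-open sets, and $\theta$-closed chains of $(X,\tau)$ are exactly the $\tau_\theta$-closed chains, so condition (3) asserts precisely that every $\tau_\theta$-closed chain in $(X,\tau_\theta)$ is compact, i.e.\ that $(X,\tau_\theta)$ is chain-compact.

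I would also observe that the $\theta$-$\uparrow\downarrow$-closedness hypothesis says that for every $x\in X$ the sets $\uparrow x$ and $\downarrow x$ are $\theta$-closed in $(X,\tau)$, hence closed in $(X,\tau_\theta)$; equivalently, $(X,\tau_\theta)$ is $\uparrow\downarrow$-closed. Thus $(X,\tau_\theta)$ satisfies the hypothesis of Theorem \ref{th23}, and applying that theorem to it yields the equivalence of the three conditions (1), (2), (3) on $(X,\tau)$.

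The only non-routine point is making sure that the semilattice structure behaves well under the passage to $\tau_\theta$ — but since Theorem \ref{th23} is a statement about topologized semilattices (no continuity of multiplication is assumed) and the semigroup operation on $X$ is unchanged, no further verification is needed. So the proof reduces to a short paragraph recording the above identifications and invoking Theorem \ref{th23}.
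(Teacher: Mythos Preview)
Your proposal is correct and follows precisely the approach the paper intends: the paper sets up $\tau_\theta$ and the identification of $\Theta$-closure with $\tau_\theta$-closure immediately before stating the theorem, and (as in the analogous $\delta$-case, Theorem~3.5) the result is obtained by applying Theorem~\ref{th23} to $(X,\tau_\theta)$. The paper actually omits an explicit proof here, so your careful verification that conditions (1)--(3) and the $\theta$-$\uparrow\downarrow$-closedness hypothesis translate verbatim to completeness, weak $chain^\bullet$-compactness, chain-compactness, and $\uparrow\downarrow$-closedness of $(X,\tau_\theta)$ is exactly what is needed to fill it in.
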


\section{The relationship of different types of completeness}

In this chapter, we will look at the relationships of the classes
of topologized semilattices that we introduced earlier.

In \cite{Vel}, it was proved that for an arbitrary topological space $X$
and its subset $A$ the following inclusions hold: $\overline{A}
\subset \deltacl{A} \subset \thetacl{A}$. From the definitions, it
is clear that the same is always true for  $\thetacl{A} \subset
\sthetacl{A}$. In addition, the same paper shows that the
operators $\delta$-closures and $\theta$-closures coincide in the
class of regular spaces.

\begin{proposition}
Let $X$ be a topologized semilattice. Then

1) if $X$ is complete, then it is $\delta$-complete;

2) if $X$ is $\delta$-complete, then it is $\theta$-complete;

3) if $X$ is $\theta$-complete, then it is $\Theta$-complete.
\end{proposition}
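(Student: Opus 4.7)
The plan is to reduce all three implications to the single chain of inclusions
\[
\overline{A} \subset \deltacl{A} \subset \thetacl{A} \subset \sthetacl{A},
\]
which is noted at the start of the section (the first two inclusions are Velichko's, and the third is immediate from the definition of $\sthetacl{A}$ as the intersection of all $\theta$-closed sets containing $A$). Since each of the four completeness notions is defined by requiring $\inf C$ and $\sup C$ to lie in the respective closure of an arbitrary non-empty chain $C$, the implications follow by pointwise containment.

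First, I would fix an arbitrary non-empty chain $C \subset X$ and recall that the two distinguished elements we must track are $\inf C$ and $\sup C$ (as elements of the ambient semilattice $X$, when they exist). For (1), assuming $X$ is complete, $\inf C, \sup C \in \overline{C}$ by definition; applying $\overline{C} \subset \deltacl{C}$ gives $\inf C, \sup C \in \deltacl{C}$, i.e.\ $X$ is $\delta$-complete. For (2), assuming $\delta$-completeness, the same argument with $\deltacl{C} \subset \thetacl{C}$ yields $\theta$-completeness. For (3), $\thetacl{C} \subset \sthetacl{C}$ upgrades $\theta$-completeness to $\Theta$-completeness.

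The only point that requires any thought is to make sure that the assumed completeness does in fact supply the existence of $\inf C$ and $\sup C$ as elements of $X$ before we worry about which closure they live in; but this is built into each definition (each notion asserts both existence of $\inf C$, $\sup C$ and membership in the relevant closure). There is no real obstacle here: the statement is essentially a bookkeeping consequence of the monotonicity of the four closure operators, and the proof is a one-line argument repeated three times.
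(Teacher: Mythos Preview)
Your proposal is correct and matches the paper's approach exactly: the paper records the chain of inclusions $\overline{A} \subset \deltacl{A} \subset \thetacl{A} \subset \sthetacl{A}$ immediately before the proposition and then states it without a separate proof, treating it as an immediate consequence. Your write-up simply makes this explicit.
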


\begin{proposition}
For a regular topologized semilattice $X$ the following conditions
are equivalent:

1) $X$ is complete;

2) $X$ is $\delta$-complete;

3) $X$ is $\theta$-complete;

4) $X$ is $\Theta$-complete.
\end{proposition}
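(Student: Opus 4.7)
The plan is to leverage the preceding proposition, which already supplies the implications (1)$\Rightarrow$(2)$\Rightarrow$(3)$\Rightarrow$(4), and then to close the cycle by establishing (4)$\Rightarrow$(1) in the regular setting. The strategy is not to argue directly about chains at all: instead I would show that in a regular space the four closure operators $\overline{\,\cdot\,}$, $\deltacl{\,\cdot\,}$, $\thetacl{\,\cdot\,}$, $\sthetacl{\,\cdot\,}$ all coincide, after which the four definitions of completeness collapse to a single condition.

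First I would invoke the result of Velichko recalled at the beginning of this section: for a regular topological space one has $\overline{A}=\deltacl{A}=\thetacl{A}$ for every $A\subset X$. This is enough to identify conditions (1), (2), (3), since those definitions differ only in which of these three closures is required to contain $\inf C$ and $\sup C$.

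Next I would absorb (4) into the same equivalence by proving that in a regular space $\thetacl{A}=\sthetacl{A}$ for every $A\subset X$. The key observation is that $\thetacl{A}$ is already $\theta$-closed: applying the Velichko identification to the set $B:=\thetacl{A}=\overline{A}$, which is closed in the ordinary sense, yields $\thetacl{B}=\overline{B}=B$, so $B$ is $\theta$-closed. Hence $\thetacl{A}$ is one of the $\theta$-closed sets whose intersection defines $\sthetacl{A}$, which gives $\sthetacl{A}\subset\thetacl{A}$; the opposite inclusion $\thetacl{A}\subset\sthetacl{A}$ is immediate from the definitions. Combining this with the previous paragraph, all four closure operators agree on $X$, and hence the four completeness notions coincide, completing the circle of implications.

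The only subtle point is recognising that the apparent extra strength of $\Theta$-closure over $\theta$-closure disappears as soon as the $\theta$-closure operator becomes idempotent, which is precisely what regularity provides via the identification $\thetacl{A}=\overline{A}$. No additional argument involving chains, $H$-sets, or weak topologies is needed beyond what the preceding proposition already delivers.
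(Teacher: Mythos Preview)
Your proposal is correct and matches the paper's (implicit) argument: the proposition is stated without a separate proof, as an immediate consequence of the preceding remarks that the closure, $\delta$-closure and $\theta$-closure operators agree in regular spaces, together with Proposition~6.1. One small quibble: what the paper actually recalls from Veli\v{c}ko is only the equality $\deltacl{A}=\thetacl{A}$ in regular spaces, not the full triple identity; the remaining equality $\overline{A}=\thetacl{A}$ is of course an elementary restatement of regularity, so your invocation is harmless, and your handling of the $\Theta$-closure via idempotence of $\theta$-closure is exactly the right way to absorb condition~(4).
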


\begin{proposition}
Let $X$ be a topologized semilattice.

1). If $X$ is weak chain$^\bullet$ compact, it is also weak
$\delta$-chain$^\bullet$ compact.

2). If $X$ is a weak $\delta$-chain$^\bullet$ compact or weak
$\theta$-chain$^\bullet$-compact, it is weak
$\Theta$-chain$^\bullet$-compact. If, in addition, $X$ is
$\theta$-${\updownarrow}$-closed, then weak
$\delta$-chain$^\bullet$-compactness implies weak
$\theta$-chain$^\bullet$-compactness.
\end{proposition}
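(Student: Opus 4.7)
The unifying observation is that each of the four weak topologies involved here is given by an explicit subbase of ``closed chains of a specified type,'' and every implication we are asked to prove reduces to a containment of these subbases: if the closed subbase of $\tau_1$ is contained in the closed subbase of $\tau_2$, then $\tau_1\subset\tau_2$, so compactness in the finer topology implies compactness in the coarser one. The plan is therefore to read off each implication from the appropriate subbase inclusion, using throughout the closure hierarchy $\overline{A}\subset\deltacl{A}\subset\thetacl{A}\subset\sthetacl{A}$ recalled just before the proposition, which yields the chain ``$\theta$-closed $\Rightarrow$ $\delta$-closed $\Rightarrow$ closed.''

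For (1), any $\delta$-closed chain is closed, so the subbase of the weak $\delta$-$chain^\bullet$-topology sits inside the subbase of the weak $chain^\bullet$-topology; the implication follows. For the first assertion of (2), every $\theta$-closed chain is a $\delta$-closed chain (giving weak $\delta$-$chain^\bullet$-compact $\Rightarrow$ weak $\Theta$-$chain^\bullet$-compact), and any $\theta$-closed chain $C$ trivially equals $\thetacl{C}$, so it appears as a subbase element of the weak $\theta$-$chain^\bullet$-topology (giving weak $\theta$-$chain^\bullet$-compact $\Rightarrow$ weak $\Theta$-$chain^\bullet$-compact). These three parts are routine bookkeeping with the closure hierarchy.

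The substantive step is the last assertion: under $\theta$-${\updownarrow}$-closedness, weak $\delta$-$chain^\bullet$-compactness should imply weak $\theta$-$chain^\bullet$-compactness. It suffices to show that for every chain $C\subset X$ the set $\thetacl{C}$ is a $\delta$-closed chain, because then the subbase of the weak $\theta$-$chain^\bullet$-topology will be contained in the subbase of the weak $\delta$-$chain^\bullet$-topology. The chain property is exactly Lemma~\ref{chain_lemma}, which is where $\theta$-${\updownarrow}$-closedness enters. The hard part is the $\delta$-closedness of $\thetacl{C}$, which I would derive from the purely topological fact that $\thetacl{A}$ is $\delta$-closed for every subset $A$ of every topological space: given $y\in\deltacl{\thetacl{A}}$ and an open neighborhood $U$ of $y$, pick $z\in\thetacl{A}\cap Int\overline{U}$; since $Int\overline{U}$ is an open neighborhood of $z$ and $z\in\thetacl{A}$, we get $A\cap\overline{Int\overline{U}}\neq\emptyset$, and the inclusion $\overline{Int\overline{U}}\subset\overline{U}$ yields $A\cap\overline{U}\neq\emptyset$, so $y\in\thetacl{A}$. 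Once this identity is verified, $\thetacl{C}$ is simultaneously a chain and $\delta$-closed, the subbase containment is in hand, and the final implication follows. The single genuine obstacle is this verification that the $\theta$-closure is always $\delta$-closed; everything else is bookkeeping.
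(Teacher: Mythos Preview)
Your proposal is correct and follows the same strategy as the paper: compare the closed subbases of the various weak topologies using the hierarchy $\theta$-closed $\Rightarrow$ $\delta$-closed $\Rightarrow$ closed, and conclude the compactness implications from the resulting topology containments. The one place where you add genuine content is the last implication: the paper simply asserts that in a $\theta$-${\updownarrow}$-closed semilattice the set $\thetacl{C}$ is a $\delta$-closed chain, invoking Lemma~\ref{chain_lemma} implicitly for the chain part and offering no argument for $\delta$-closedness; you supply the missing general topological fact that $\thetacl{A}$ is $\delta$-closed for every subset $A$ of every space, with a clean two-line verification. So your route is the paper's route, but with the gap filled in.
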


\begin{proof}
The first part of the statement follows from the fact that
$\theta$-closed chain is $\delta$-closed, and, hence, it is
closed; thus weak $\Theta$-chain$^\bullet$-topology is contained
in weak $\delta$-chain$^\bullet$-topology and weak
$\theta$-chain$^\bullet$-topology. Note that weak
$\delta$-chain$^\bullet$-topology  weaker than weak
chain$^\bullet$-topology. It remains to note that compactness in
some topology implies compactness in any weaker topology.

The second part of the statement follows from the fact that in a
$\theta$-${\updownarrow}$-closed semilattice the set $\thetacl{C}$
is a $\delta$-closed chain.
\end{proof}

Finally, we show how the various types of compactness of chains
are related.

\begin{proposition}
Let $(X, \tau)$ be a topologized semilattice. Then

1) if every closed in $(X,\tau)$ chain is compact (i.e. $X$ is
chain-compact) then every $\delta$-closed chain is compact in $(X,
\tau_\delta)$;

2) if every $\delta$-closed chain is compact in $(X,\tau_\delta)$
then every $\theta$-closed chain is compact in $(X,\tau_\theta)$;

3) if $\thetacl{C}$ is an $H$-set for any chain $C \subset X$ then
every $\theta$-closed chain is compact in $(X, \tau_\theta)$.

Moreover, if $X$ is a $\theta$-${\updownarrow}$-closed
subsemilattice then

4) if every $\delta$-closed chain is compact in $(X,\tau_\delta)$
then $\thetacl{C}$ is an $H$-set for any chain $C \subset X$.
\end{proposition}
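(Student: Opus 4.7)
The plan rests on the nested inclusions $\tau_\theta\subset\tau_\delta\subset\tau$, which follow from the standard chain $\overline A\subset\deltacl A\subset\thetacl A$ together with the observation that compactness of a fixed subset passes from a stronger to a weaker topology. For (1), every $\delta$-closed chain $C$ is in particular $\tau$-closed, hence by hypothesis compact in $(X,\tau)$, and so remains compact in the weaker topology $(X,\tau_\delta)$. For (2), a $\theta$-closed chain satisfies $C=\thetacl C\supset\deltacl C\supset C$, so $C$ is $\delta$-closed; by hypothesis it is compact in $(X,\tau_\delta)$, and since $\tau_\theta\subset\tau_\delta$ it is compact in $(X,\tau_\theta)$ as well.

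For (3), I would use the standard characterization that a set $V\subset X$ is $\theta$-open precisely when every $x\in V$ admits a $\tau$-open neighborhood $U$ with $\overline U\subset V$. Given a $\theta$-closed chain $C$ (so $\thetacl C=C$ is an $H$-set by hypothesis) and a $\tau_\theta$-open cover $\{V_\beta\}$ of $C$, pick for each $x\in C$ an index $\beta(x)$ with $x\in V_{\beta(x)}$ and a $\tau$-open $U_x$ with $x\in U_x\subset\overline{U_x}\subset V_{\beta(x)}$. The $H$-set property applied to the $\tau$-open cover $\{U_x\}_{x\in C}$ yields $x_1,\dots,x_n$ with $C\subset\overline{U_{x_1}}\cup\cdots\cup\overline{U_{x_n}}\subset V_{\beta(x_1)}\cup\cdots\cup V_{\beta(x_n)}$, extracting a finite subcover from the original family.

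Part (4) is where I expect the main work. Lemma \ref{chain_lemma} (applicable thanks to the $\theta$-$\updownarrow$-closedness) first gives that $\thetacl C$ is a chain. The key auxiliary fact is that $\thetacl A$ is automatically $\delta$-closed for any subset $A$ of any topological space: if $x\notin\thetacl A$ and $U$ is a $\tau$-open neighborhood of $x$ with $\overline U\cap A=\emptyset$, then $W:=\mathrm{int}(\overline U)$ contains $U$, and since $U$ is dense in $\overline U$ we have $\overline W=\overline U$; hence for every $z\in W$ the set $W$ is an open neighborhood of $z$ with $\overline W\cap A=\emptyset$, so $z\notin\thetacl A$, i.e.\ $\mathrm{int}(\overline W)=W$ is disjoint from $\thetacl A$ and $x\notin\deltacl{\thetacl A}$. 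Combining these, $\thetacl C$ is a $\delta$-closed chain and, by hypothesis, compact in $(X,\tau_\delta)$. Given any $\tau$-open cover $\{U_\alpha\}$ of $\thetacl C$, enlarge each member to the regularly open (hence $\delta$-open) set $\mathrm{int}(\overline{U_\alpha})$; the $\tau_\delta$-compactness supplies indices $\alpha_1,\dots,\alpha_n$ with $\thetacl C\subset\mathrm{int}(\overline{U_{\alpha_1}})\cup\cdots\cup\mathrm{int}(\overline{U_{\alpha_n}})\subset\overline{U_{\alpha_1}}\cup\cdots\cup\overline{U_{\alpha_n}}$, exhibiting $\thetacl C$ as an $H$-set. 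The real content of (4) is the $\delta$-closedness of $\thetacl A$ and this refinement trick; the $\theta$-$\updownarrow$-closedness hypothesis is used only to ensure $\thetacl C$ is a chain, so that the hypothesis on $\delta$-closed chains applies.
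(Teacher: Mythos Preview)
Your proposal is correct and follows essentially the same route as the paper: parts (1)--(2) via the nested topologies $\tau_\theta\subset\tau_\delta\subset\tau$, part (3) via the characterization of $\theta$-open sets and the $H$-set property, and part (4) via $\thetacl{C}$ being a $\delta$-closed chain together with the fact that $\mathrm{int}(\overline{U})$ is $\delta$-open. The only difference is that you supply a self-contained proof that $\thetacl{A}$ is $\delta$-closed for an arbitrary $A$ (and note that $\theta$-$\updownarrow$-closedness is needed only to make $\thetacl{C}$ a chain), whereas the paper simply asserts that $\thetacl{C}$ is a $\delta$-closed chain under the $\theta$-$\updownarrow$-closedness hypothesis without separating the two ingredients.
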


\begin{proof}
Points 1) and 2) are obvious. We prove point 3).

Let $C \subset X$ be a $\theta$-closed (in $X$) chain and $\lbrace
U_\alpha \rbrace_{\alpha \in \Lambda}$  be a family of
$\theta$-open (in $X$) sets covering $C$. By definition of
$\theta$-open set, for each point $x \in C \cap U_\alpha$ there
exists a neighborhood $U_x$ such that $\overline{U_x} \cap (X
\setminus U_\alpha) = \emptyset$ that is  $\overline{U_x} \subset
U_\alpha$. Since $\thetacl{C} = C$, the set $C$ is a $H$-set.
Since $\lbrace U_x \rbrace_{x \in C}$ is an open cover of $C$,
there is a finite set $x_1, ..., x_n$ such that $C \subset
\bigcup\limits_{i=1}^{n} \overline{U_{x_i}}$. For every $x_i$
there is $\alpha_i \in \Lambda$ such that $\overline{U}_{x_i}
\subset U_{\alpha_i}$. It follows that $\lbrace U_{\alpha_i}
\rbrace$ is a finite cover of $C$.

Now we prove 4). Consider a chain $C \subset X$. Since $X$ is
$\theta$-${\updownarrow}$-closed, the set $\thetacl{C}$ is a
$\delta$-closed chain. Let $\lbrace U_\alpha \rbrace_{\alpha \in
\Lambda}$ is a cover of $C$. It is known [1], that the sets
$Int{\overline{U}}$ are $\delta$-open. Then there is a finite set
$\alpha_1, ..., \alpha_n$ such that $C \subset
\bigcup\limits_{i=1}^{n} Int{\overline{U_{\alpha_i}}}$. Then $C
\subset \bigcup\limits_{i=1}^{n} \overline{U_{\alpha_i}}$.
\end{proof}

\bigskip

\begin{center}

\ingrw{130}{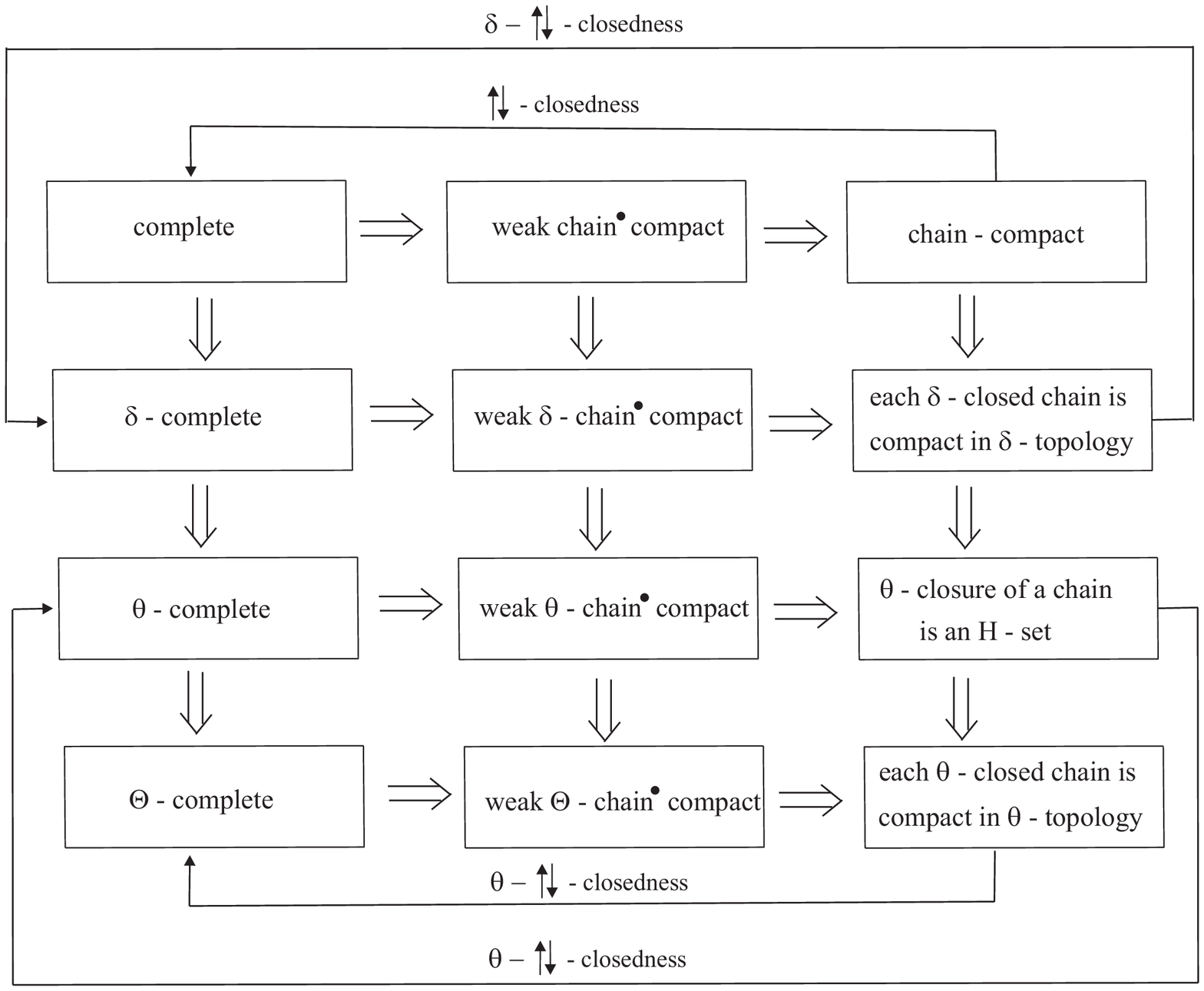}

\medskip

Diagram 1. Implications between various types of completeness of
semitopological semilattices

\end{center}

\bigskip

\section{Examples}

In conclusion, we show examples that separate the classes we have
introduced.

\begin{example}
There exists a $\delta$-complete topologized semilattice such that
it is not complete.

Consider the segment $I=[0,1]$, ordered by the natural order. We
define on $I$ the topology $\tau$ in terms of fundamental systems
of neighborhoods $\mathcal{B}(x)$:

$$  \mathcal{B}(x)= \left\{
\begin{array}{lcr}
   (x - \varepsilon, x + \varepsilon) \cap I : \varepsilon > 0 , & \text{$x \neq 0$}.\\
   ([0, \varepsilon) \cap I) \setminus \{ \frac{1}{n} : n\in \mathbb{N}\} : \varepsilon > 0 , & \text{$x = 0$}.\\
  \end{array}
\right.
$$

\medskip

Note that $Int(\overline{(x - \varepsilon, x + \varepsilon) \cap
I}) = (x - \varepsilon, x + \varepsilon) \cap I$ and
$Int(\overline{([0, \varepsilon) \cap I) \setminus \{\frac{1}{n}:
n\in \mathbb{N} \}}) = [0,\varepsilon) \cap I$, i.e.
$\delta$-topology on $(I, \tau)$ coincide with the Euclidean
topology on $I$, hence, $(I, \tau)$ is $\delta$-complete.

Since $0 \not\in \overline{\{\frac{1}{n} : n\in \mathbb{N}\}}$
then $(I,\tau)$ is not complete.
\end{example}

\begin{example} There exists a $\theta$-complete topologized semilattice such that it is not
$\delta$-complete.

Let $X=[0,1]\times[0, 1]$. Denote by $B(x, \varepsilon)$ --- an
open $\varepsilon$-ball of $x$ in $\mathbb{R}^2$, $B[x,
\varepsilon]$ -- a closed $\varepsilon$-ball of $x$. We define the
topology on $X$ in terms of fundamental systems of neighborhoods
$\mathcal{B}(x)$:

$$ \mathcal{B}(x)=\left\{
\begin{array}{lcr}
    \lbrace B(x, \varepsilon) \cap X : \varepsilon > 0 \rbrace, \text{$x = (a, b), b \neq 0$}.\\
    \lbrace (B(x, \varepsilon)) \cap X) \setminus \lbrace 0 \rbrace \times (a-\varepsilon, a+\varepsilon) \cup \lbrace x \rbrace : \varepsilon > 0 , \text{$x = (a, 0)$}.
 \end{array}
\right.
$$

Define the operation on $X$:

$$ xy=\left\{
\begin{array}{lcr}
      x, \text{$x = y \vee x = (a_1, 0), y = (a_2, 0), a_1 < a_2$}.\\
    (0, 0), \text{otherwise}.
  \end{array}
\right.
$$
Note that $X$ is a semilattice in which the only maximum infinite
chain is the set $C = [0, 1] \times \lbrace 0 \rbrace$, order
isomorphic to segment $[0, 1]$. Consider a chain $A \subset C$. By
order isomorphism, there are $\sup{A} \in C$ and $\inf{A} \in C$.
Clearly, the closure of the neighborhood $U = (B((0, a),
\varepsilon)) \cap X) \setminus \lbrace 0 \rbrace \times
(a-\varepsilon, a+\varepsilon) \cup \lbrace x \rbrace$ of the
point $(0, a)$ is the set $B[a, \varepsilon] \cap X$, i.e.,
$\overline{U} \cap C = \lbrace 0 \rbrace \times [a - \varepsilon,
a + \varepsilon]$. By definitions $\sup$ and $\inf$, we have that
$\sup{A} \in \thetacl{A}$ and $\inf{A} \in \thetacl{A}$. But, $X$
is not $\delta$-complete semilattice: for the chain $A =
(\frac{1}{3}, \frac{2}{3}) \times \lbrace 0 \rbrace$ we have
$\sup{A} = \frac{1}{3} \times \lbrace 0 \rbrace, \inf{A} =
\frac{2}{3} \times \lbrace 0 \rbrace$, but $U=Int\overline{U}$ for
a base neighborhood $U$ of the point $(\frac{1}{3}, 0)$ and $U
\cap A = \emptyset$.
\end{example}

\begin{example}
There exists a $\Theta$-complete topologized semilattice such that
it is not $\theta$-complete.

Let $A_0 = \lbrace 0 \rbrace \times \omega_1, A_1 = \lbrace 1
\rbrace \times \omega_1, A_2 = \lbrace 2 \rbrace \times (\omega_1
+ 1)$. For an ordinal number $\alpha$ define the sets
$\lim^2(\alpha) := \lbrace \beta \in \lim(\alpha) : \beta =
\sup(\lim(\beta)) \rbrace$ and $\lim^1(\alpha) := \lim(\alpha)
\setminus \lim^2(\alpha)$. Clearly, the sets $\lim(\omega_1),
\lim^1(\omega_1)$ and $\lim^2(\omega_1)$ are unbounded in
$\omega_1$, and because they has the same order type; fix
isomorphisms  $f_1 : \lim(\omega_1) \rightarrow \lim^1(\omega_1)$
and $f_2 : \lim^2(\omega_1) \rightarrow \lim^2(\omega_1)$ of
well-ordered sets.

Let $A = A_0 \sqcup A_1 \sqcup A_2$.  Define on $A$ the following
equivalence relation: $x \sim y$ if and only if $x = y$ or $x =
(0, \alpha), y = (1, f_1(\alpha))$ where $\alpha \in
\lim(\omega_1)$ or $x = (1, \alpha), y = (2, f_2(\alpha))$ where
$\alpha \in \lim^1(\omega_1)$.

\bigskip

\begin{center}

\ingrw{100}{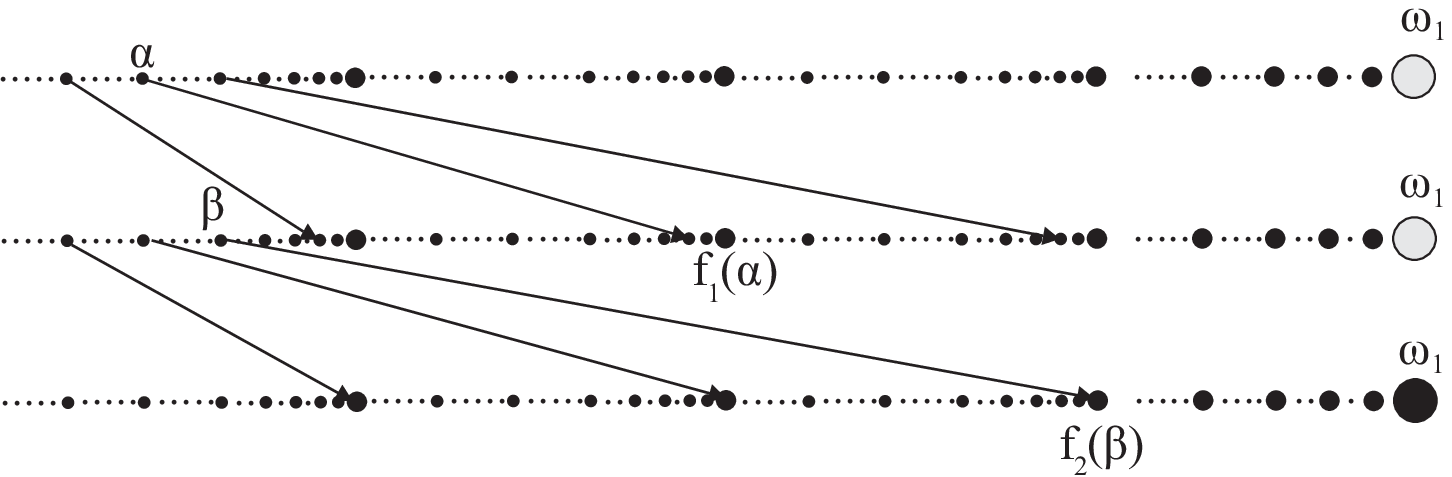}

Diagram 2. Example 7.3
\end{center}

Define on the $Y = A/{\sim}$ the following operation:

$$ \lbrack x \rbrack \cdot \lbrack y \rbrack =\left\{
\begin{array}{lcr}
      \lbrack x \rbrack, \, \, \, \lbrack x \rbrack=\lbrack y \rbrack.\\
    \lbrack(0,\min\{\alpha,\beta\}) \rbrack,  \, \, \, (0,\alpha)\in \lbrack x \rbrack, (0,\beta) \in \lbrack y \rbrack.\\
    \lbrack x \rbrack, \, \, \, \lbrack y \rbrack=\lbrack (2,\omega_1) \rbrack.\\
    \lbrack y \rbrack, \, \, \, \lbrack x \rbrack=\lbrack (2,\omega_1) \rbrack.\\
   \lbrack (0,0) \rbrack, \, \, \, \text{otherwise}.\\
\end{array}
\right.
$$

Note that  $(Y, \cdot)$ is semilattice in which only maximum
infinite chain is the set $Z = \lbrace [(0, \alpha)] : \alpha \in
\omega_1 \rbrace \cup \lbrace [(2, \omega_1)] \rbrace$.

The set  $\lbrace [(i, \beta)] : \beta > \gamma$ and $\beta \leq
\alpha \rbrace$ and $\lbrace [(i, \beta) : \beta > \gamma$ and
$\beta \leq \alpha$ and $\beta \in I(\omega_1)]$, where
$i=\overline{0, 2}$ denote by $(\gamma, \alpha]^i$ and $(\gamma,
\alpha]_I^i$, respectively.  We define the topology on $Y$ in
terms of fundamental systems of neighborhoods $\mathcal{B}(x)$:

$$  \mathcal{B}(x)=\left\{
\begin{array}{lcr}
    \{\{x\}\}, \text{$x = [(i, \alpha)], i=\overline{0, 2}, \alpha \in I(\omega_1)$}.\\
    \{ (\beta, \alpha]^0 \cup (\gamma, f_1(\alpha)]_I^1 : \beta < \alpha, \gamma < f_1(\alpha) \}, \text{$x = [(0, \alpha)], \alpha \in \lim(\omega_1)$}.\\
    \{ (\beta, \alpha]^1 \cup (\gamma, f_2(\alpha)]_I^2 : \beta < \alpha,  \gamma < f_2(\alpha) \}, \text{$x = [(1, \alpha)], \alpha \in \lim^2(\omega_1)$}.\\
    \{ (\beta, \alpha]_I^2 : \beta < \omega_1 \}, \text{$x = [(2, \alpha)], \alpha = \omega_1 \vee \alpha \in
    \lim^1(\omega_1)$}.\\
 \end{array}
\right.
$$

We show that the semilattice $Y$ is $\Theta$-complete. Let $C
\subset Y$ be a chain. If $C$ is finite, then it contains infimum
and supremum. Otherwise $C \subset Z$. Since the chain $Z$
well-ordered by the natural order, $C$ contains infimum. If the
set $E := \lbrace \alpha \in \omega_1 : [(0, \alpha)] \in C
\rbrace$ is bounded in $\omega_1$, then $\sup{C} = [(0, \sup{E})]
\in \overline{C} \subset \sthetacl{C}$. If $E$ is unbounded then
$\sup{C} = [(2, \omega_1)]$. Note that $B = \lbrace [(0, \alpha)]
: \alpha \in \omega_1 \rbrace$ (in subspace topology) is
homeomorphic to $\omega_1$. It follows that the set
$\overline{C}^B \cap \lbrace [(0, \alpha)] : \alpha \in
\lim(\omega_1) \rbrace$ is of power $\omega_1$ (since it is the
intersection of closed unbounded sets). Then $\overline{C} \cap
\lbrace [(1, \alpha)] : \alpha \in \lim^1(\omega_1)\rbrace$ is
also of power $\omega_1$ (since $\overline{C}^B \subset
\overline{C}$, $[(0, \alpha)] = [1, f_1(\alpha)]$ for limit
ordinals $\alpha < \omega_1$ and $f_1$ is bijection). Then
the set $\thetacl{\overline{C}} \cap \lbrace [(1, \alpha)] : \alpha \in
\lim^2(\omega_1)\rbrace \subset \lb \subset \overline{C}^{\theta^2} \cap \lbrace [(1, \alpha)] : \alpha \in
\lim^2(\omega_1)\rbrace$ is of power $\omega_1$, since the closure
of a standard neighborhood of the point $[(1, \alpha)]$, where
$\alpha \in \lim^2(\omega_1)$, contains an infinite number of
points $[(1, \beta)]$, where $\beta \in \lim^1(\omega_1)$. Note
that $\overline{(\beta, \omega_1]_I^2} \supset (\beta, \omega_1]^2$, so it is easy to see that $[(2, \omega_1)] \in \overline{C}^{\theta^2} \subset
\sthetacl{C}$.

Note that if $C = \lbrace [(0 ,\alpha] : \alpha \in I(\omega_1)
\rbrace$, then $[(2, \omega_1)] \in \overline{C}^{\theta^2}$, but
$[(2, \omega_1)] \not\in \thetacl{C}$, i.e.,  $Y$ is not
$\theta$-complete semilattice.
\end{example}

\bibliographystyle{model1a-num-names}
\bibliography{<your-bib-database>}

\end{document}